\newcommand{\bmu}{\bm{u}}
\newcommand{\bmx}{\bm{x}}
\newcommand{\bmy}{\bm{y}}
\newcommand{\bmv}{\bm{v}}
\newcommand{\bmm}{\bm{m}}
\newcommand{\xstar}{\bmx_*}
\newcommand{\Tau}{T}
\newcommand{\Eta}{H}
\newcommand{\lrate}{\lambda}
\newcommand{\agsgd}{AGS-GD~}
\newcommand{\agssgd}{AGS-SGD~}
\newcommand{\agsadam}{AGS-Adam~}
\theoremstyle{definition}
\newtheorem{thm}{Theorem}[section]
\newtheorem{lem}[thm]{Lemma}
\newtheorem{cor}[thm]{Corollary}
\begin{document}

\title{Anisotropic Gaussian Smoothing for Gradient-based Optimization}
\author{
    \name{
        Andrew Starnes\textsuperscript{a}\thanks{CONTACT A. Starnes. Email: astarnes@lirio.com}
        and
        Guannan Zhang\textsuperscript{b}
        and
        Viktor Reshniak\textsuperscript{b}
        and
        Clayton Webster\textsuperscript{a}
    }
    \affil{
        \textsuperscript{a}Behavioral Research Learning Lab, Lirio, Inc.
        \textsuperscript{b}Computer Science and Mathematics Division, Oak Ridge National Laboratory;
    }
}

\maketitle

\begin{abstract}
This article introduces a novel family of optimization algorithms—Anisotropic Gaussian Smoothing Gradient Descent (AGS-GD), AGS-Stochastic Gradient Descent (AGS-SGD), and AGS-Adam—that employ anisotropic Gaussian smoothing to enhance traditional gradient-based methods, including GD, SGD, and Adam. The primary goal of these approaches is to address the challenge of optimization methods becoming trapped in suboptimal local minima by replacing the standard gradient with a non-local gradient derived from averaging function values using anisotropic Gaussian smoothing. Unlike isotropic Gaussian smoothing (IGS), AGS adapts the smoothing directionality based on the properties of the underlying function, aligning better with complex loss landscapes and improving convergence. The anisotropy is computed by adjusting the covariance matrix of the Gaussian distribution, allowing for directional smoothing tailored to the gradient's behavior. This technique mitigates the impact of minor fluctuations, enabling the algorithms to approach global minima more effectively. We provide detailed convergence analyses that extend the results from both the original (unsmoothed) methods and the IGS case to the more general anisotropic smoothing, applicable to both convex and non-convex, L-smooth functions. In the stochastic setting, these algorithms converge to a noisy ball, with its size determined by the smoothing parameters. The article also outlines the theoretical benefits of anisotropic smoothing and details its practical implementation using Monte Carlo estimation, aligning with established zero-order optimization techniques.

\end{abstract}

\begin{keywords}
Optimization, Gaussian smoothing, CMA, Gradient descent, Stochastic gradient descent
\end{keywords}


\section{Introduction}

A standard approach to solving optimization problems is to use gradient descent (GD) when the function is deterministic and stochastic gradient descent (SGD) or adaptive moment estimation (Adam) in the stochastic setting.
A common problem with these methods is that the path that they take can lead them into suboptimal basins that they cannot escape from.
The focus of this paper is to provide alternative optimization techniques that avoid this behavior.
In particular, we exchange the gradient of the function in GD, SGD, and Adam with a non-local gradient that comes from averaging out the values of the original function.
The resulting averaging function has the property that small fluctuations in the original function are smoothed out, which make gradient descent algorithms less likely to find non-global minima.
We refer to the anisotropic Gaussian smoothing algorithms modifying GD, SGD, and Adam as AGS-GD, AGS-SGD, and AGS-Adam, respectively.

Our goal is to minimize real-valued functions on $\mathbb{R}^d$, that is, we aim to solve the optimization problem
\begin{equation}
\label{eqn:min_of_f}
    \min_{\bmx\in\mathbb{R}^d}f(\bmx).
\end{equation}
We consider both the deterministic and stochastic settings.
In the stochastic setting, we are given $F:\mathbb{R}^d\times\Omega\to\mathbb{R}$ for some probability space $\Omega$ and a sample $\{\omega_1,...,\omega_K\}\subseteq\Omega$.
We denote
\begin{equation}
    f_{k}(\bmx)=F(\bmx,\omega_k):\mathbb{R}^d\to\mathbb{R}
\end{equation}
and assume $f:\mathbb{R}^d\to\mathbb{R}$ satisfies
\begin{equation}
    \mathbb{E}\big[f_k(\bmx)\big]=f(\bmx)
    \text{ and }
    \mathbb{E}\big[\nabla f_k(\bmx)\big]=\nabla f(\bmx).
\end{equation}
The stochastic variant of this problem is the focus of machine learning.

For any function $g:\mathbb{R}^d\to\mathbb{R}$ and symmetric, invertible matrix $\Sigma\in\mathbb{R}^{d\times d}$, we define the $\Sigma$-Gaussian smoothed version of $g$ as
\begin{align}
\label{eqn:definition_of_fsigma}
    g_{\Sigma}(\bmx)
    &=\frac{1}{\pi^{\frac{d}{2}}}\int_{\mathbb{R}^d}g\left(\bmx+\Sigma\bmu\right)e^{-\|\bmu\|^2}\;d\bmu.
\end{align}
There are two other convenient and equivalent formulations of $g_{\Sigma}$,
\begin{align}
\label{eqn:change_of_variables}
    g_{\Sigma}(\bmx)
    &=\frac{1}{\pi^{\frac{d}{2}}|\Sigma|}\int_{\mathbb{R}^d}g(\bmv)e^{-(\bmv-\bmx)^T\Sigma^{-2}(\bmv-\bmx)}\;d\bmv
    =(g\star k_{\Sigma})(\bmx)
\end{align}
where $|\Sigma|$ denotes the determinant of $\Sigma$, $\star$ denotes convolution, and
\begin{equation}
    k_{\Sigma}(\bmx)=\frac{1}{\pi^{\frac{d}{2}}|\Sigma|} e^{-\bmx^T\Sigma^{-2}\bmx}.
\end{equation}
From these formulations, as long as $f$ is nice enough, the gradient can be computed~as
\begin{align}
\label{eqn:gradient_equation}
    \nabla g_{\Sigma}(\bmx)
    &=\frac{2}{\pi^{\frac{d}{2}}}\Sigma^{-1}\int_{\mathbb{R}^d}\bmu g\left(\bmx+\Sigma\bmu\right)e^{-\|\bmu\|^2}\;d\bmu,
\end{align}
where we first pass the gradient into the integral~\eqref{eqn:change_of_variables} and the take the gradient of $k_{\Sigma}$.
Appendix~\ref{app:background_results} provides the detailed derivation of the above results.
In the stochastic case, we write the Gaussian smoothed version of $f_{k}$ as $f_{k,\Sigma}$ to indicate that we first sample and then smooth.
If each $f_k$ is bounded below, then we have that\footnote{This is due to the Fubini-Tonelli theorem (Theorem 8.8~\cite{rudin}), which allows us to switch the order of the integrals.}
\begin{equation}
    f_{\Sigma}(\bmx)=\mathbb{E}\big[f_{k,\Sigma}(\bmx)\big].
\end{equation}
In this setting, instead of solving~\eqref{eqn:min_of_f}, we want to find a solution of
\begin{equation}
    \min_{\bmx,\Sigma}\left\{
        f_{\Sigma}(\bmx)
        :\bmx\in\mathbb{R}^d,\Sigma\text{ symmetric and invertible}
    \right\},
\end{equation}
which occurs when $\Sigma=0$ (see Lemma~\ref{lem:smoothing_maintains_properties_gsgd} (\ref{item:lemma_f})).


\begin{figure}
    \centering
    \begin{minipage}{0.33\textwidth}
        \begin{subfigure}[t]{\textwidth}
            \centering
            \includegraphics[width=\linewidth]{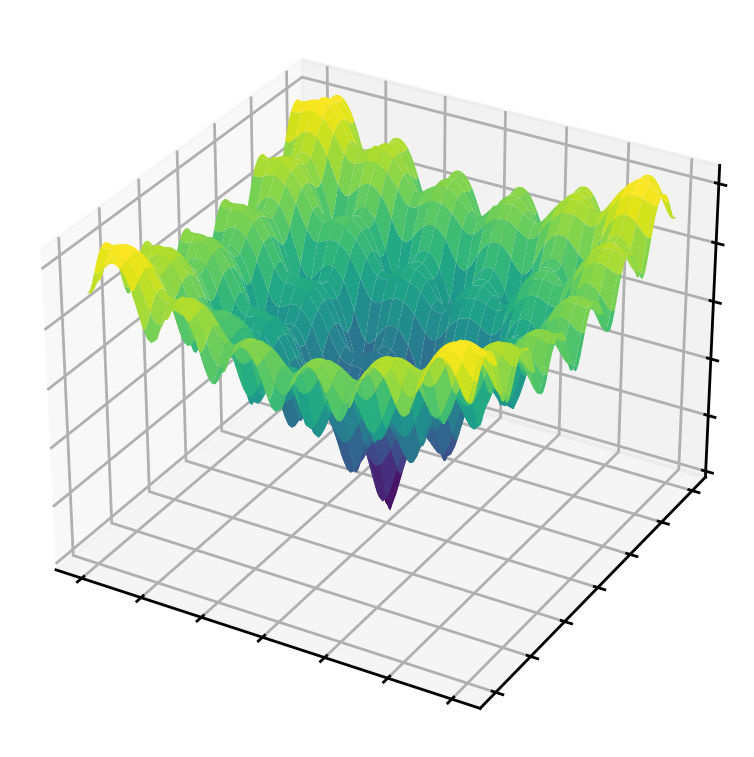}
            \caption{Ackley Function}
            \label{fig:example_anisotropic_smoothing_1}
        \end{subfigure}
    \end{minipage}
    \begin{minipage}{0.66\textwidth}
        \begin{subfigure}[t]{0.49\textwidth}
            \centering
            \includegraphics[width=\linewidth]{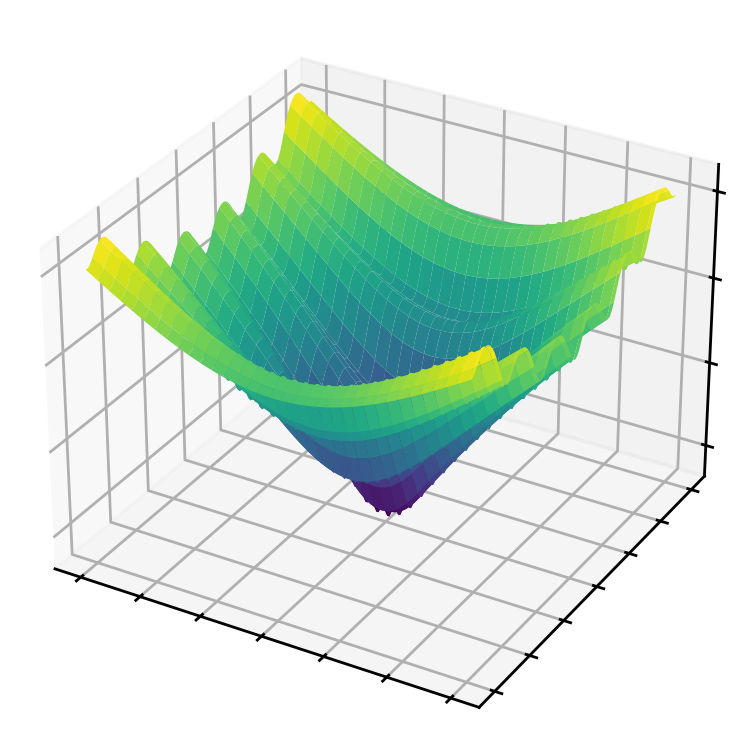}
            \caption{$\bm{e_1}$}
            \label{fig:example_anisotropic_smoothing_2}
        \end{subfigure}
        \begin{subfigure}[t]{0.49\textwidth}
            \centering
            \includegraphics[width=\linewidth]{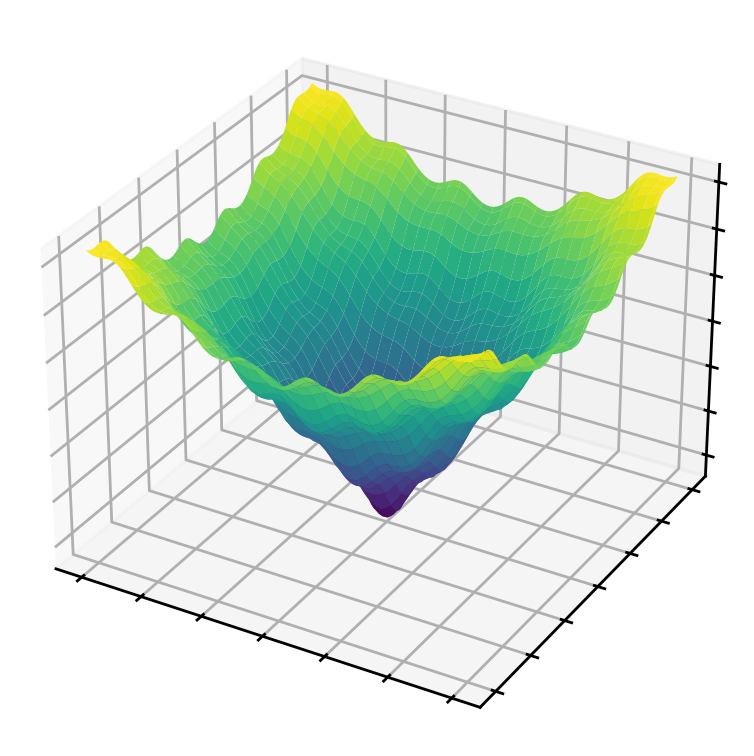}
            \caption{$\bm{e}_1+\bm{e}_2$}
            \label{fig:example_anisotropic_smoothing_3}
        \end{subfigure}
        \begin{subfigure}[t]{0.49\textwidth}
            \centering
            \includegraphics[width=\linewidth]{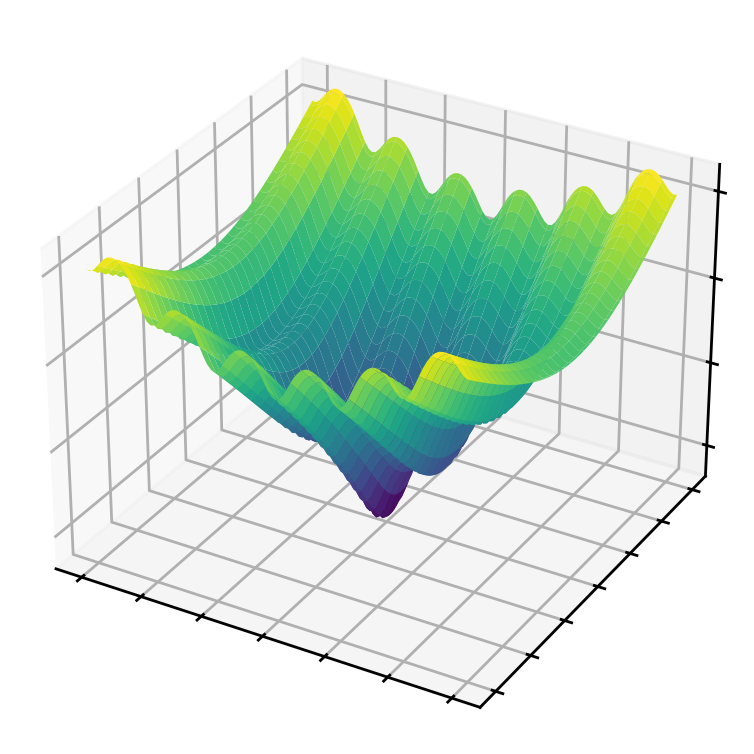}
            \caption{$\bm{e}_2$}
            \label{fig:example_anisotropic_smoothing_4}
        \end{subfigure}
        \begin{subfigure}[t]{0.49\textwidth}
            \centering
            \includegraphics[width=\linewidth]{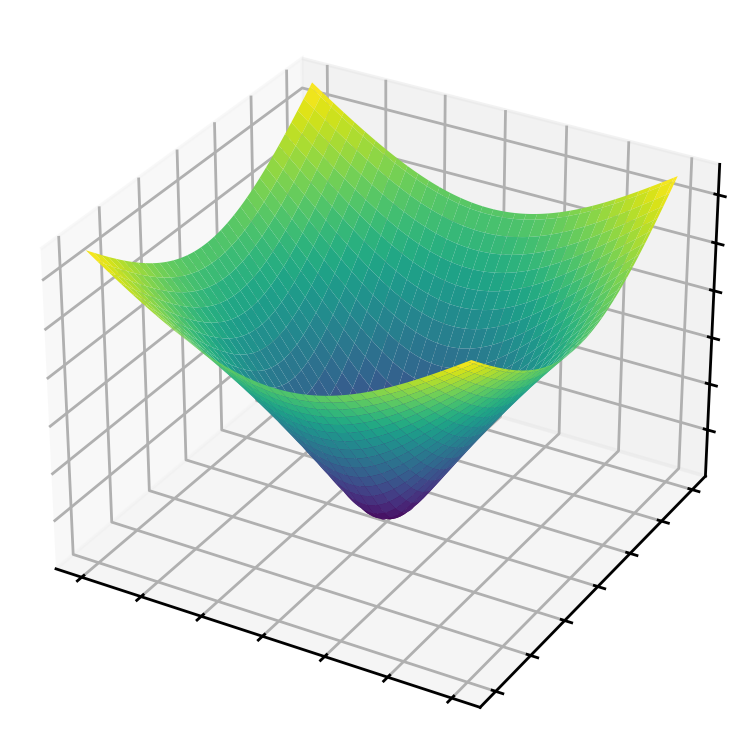}
            \caption{$\bm{e}_1$ and $\bm{e}_2$}
            \label{fig:example_anisotropic_smoothing_5}
        \end{subfigure}
    \end{minipage}
    \caption{
        Anisotropic Gaussian smoothing applied to the Ackley function, in the directions indicated, where $\{\bm{e}_1,\bm{e}_2\}$ is the canonical basis of $\mathbb{R}^2$.
        The smoothing values along the indicated directions are large enough so that the function is convex in the smoothed direction (e.g., in (\ref{fig:example_anisotropic_smoothing_2}), for fixed $y$, the function is convex in $x$).
        When smoothing in both directions (i.e., (\ref{fig:example_anisotropic_smoothing_5})), the smoothed function is fully convex.
    }
    \label{fig:example_anisotropic_smoothing}
\end{figure}

A Gaussian smoothed function is differentiable even if the original function is not~\cite{bertsekas1973stochastic}, and it can be convex even if the underlying function is far from being convex~\cite{mobahi2015link}.
Gaussian smoothing ensures that Lipschitz functions have Lipschitz gradients~\cite{NesterovSpokoiny15}.
Additionally, for certain functions and sufficiently large smoothing values, the Gaussian smoothed function may have a known minimizer~\cite{mobahi2012optimization,loog2001behavior}.
Some examples of smoothing the Ackley function can be seen in Figure~\ref{fig:example_anisotropic_smoothing}.

We generalize GD by starting with a sequence of symmetric, invertible matrices, $(\Sigma_t)_{t=1}^{T}$, and then use the gradient of $f_{\Sigma_t}$ rather than just the gradient of $f$.
We call these $\Sigma_t$ smoothing matrices.
Other works have considered similar modifications of GD.
In~\cite{NesterovSpokoiny15}, $\Sigma_t$ is the same for all $t$.
This is generalized in~\cite{gsgd}, where $\Sigma_t=\sigma_t I_d$ for some sequence $(\sigma_t)_{t=1}^{T}$ (where $I_d$ is the $d$ dimensional identity matrix).
This means that the smoothing directions are the same every iteration, just the amount of smoothing is changed.
As such, we refer to these two methods as isotropic Gaussian smoothing.
If $\sigma>0$, then we use the notation $f_{\sigma}$ to denote $f_{\sigma I_d}$.
In our case, the eigenvectors of $\Sigma_t$ can change with each iteration, which means that as we iterate we can focus on directions that need to be smoothed more than others; hence the anisotropic moniker.
The full algorithmic details of our anisotropic generalizations of GD, SGD, and Adam can be found in Algorithms~\ref{alg:agsgd},~\ref{alg:agssgd}, and~\ref{alg:agsadam}, respectively.

To conclude the introduction, we mention some notation that we use;
for $K\in\mathbb{N}$, $[K]=\{1,2,...,K\}$ and the uniform distribution on $[K]$ is denoted $\text{Unif}[K]$.


\subsection{Related Works}
\label{sec:related_works}

Gaussian smoothing has a long history in optimization~\cite{blake1987visual,more1996smoothing,duchi2012randomized,hazan2016graduated}.
The seminal paper about Gaussian smoothing is~\cite{NesterovSpokoiny15}, which proves many fundamental results (e.g., Lemma~\ref{lem:smoothing_maintains_properties_nesterov}) and uses Gaussian smoothing as a gradient free optimization method.
Their paper fixes the smoothing value, and we extend their results by introducing a changing smoothing matrices.
The series of papers~\cite{gsgd,gssgd} provide isotropic versions of Algorithms~\ref{alg:agsgd},~\ref{alg:agssgd}, and~\ref{alg:agsadam}, denoted GSmoothGD, GSmoothSGD, and GSmoothAdam, respectively.
Many of our results are based on these works.
The key difference is that, just like in~\cite{NesterovSpokoiny15}, they fix an initial Gaussian distribution and then they focus on a smoothing $\Sigma_t=\sigma_tI_d$ for a sequence $(\sigma_t)_{t=1}^{T}\subseteq[0,\infty)$.
We do not have an underlying, stationary distribution, but rather (potentially) change it each iteration.
For an overview of works related to Gaussian smoothing, see Section 1.2 of~\cite{gsgd}.

Another particular relevant paper on Gaussian smoothing, especially regarding how to update the smoothing parameter, is~\cite{iwakiri2022single}.
They provide a particular instance of GSmooth(S)GD where the smoothing parameter, $\sigma_t$, either follows a geometric progression or is updated based on the heat equation (since $f_{\sigma_t}=f\star k_{\sigma_tI_d}$ is the fundamental solution of the heat equation).
Depending on $\Sigma_t$, we will not be able to apply a similar heat equation based update since $f_{\Sigma}$ may not be a solution to the heat equation. Instead, we will use the Covariance Matrix Adaptation (CMA) update.

CMA is a state-of-the-art stochastic optimization algorithm for continuous domains \cite{Hansen2001}. It adapts the covariance matrix of a multivariate normal distribution to efficiently explore the parameter space, making it particularly effective for non-linear, non-convex optimization problems. CMA is self-adaptive, requiring minimal parameter tuning, and has demonstrated robust performance across various domains, including hyperparameter optimization, robotics, and deep learning \cite{Hansen2006,Loshchilov2017}. The algorithm iteratively samples candidate solutions, evaluates their fitness, and updates the distribution parameters based on successful candidates, gradually improving the search distribution \cite{Hansen2004}. CMA has been successfully applied to high-dimensional problems and has shown superior performance compared to many other evolutionary algorithms and gradient-based methods \cite{Auger2005,Igel2007}.
Note that the CMA update is used in our experiments (see Section~\ref{sec:numerical_methods}), but our theory is applicable to any sequence of smoothing matrices regardless of update method.

As mentioned before, our method lends itself to being a gradient free method by numerically approximating \eqref{eqn:gradient_equation} (typically using Monte Carlo).
See the discussion in Section~\ref{sec:numerical_methods} for details.
Another zero-order, computationally efficient modification of Gaussian smoothing is given by~\cite{zhang2020scalable,tran2020adadgs}.
Instead of approximating $\nabla f_{\Sigma}$ directly, they approximate $\nabla f$ by first Gaussian smoothing in $d$ orthogonal directions and then computing the analogous one-dimensional directional gradients using a quadrature scheme.
Many gradient free methods use a similar approach, but their motivations do not come from Gaussian smoothing.
A good overview of zero-order methods is~\cite{liu2020primer}, where several, but not all, of the methods discussed there are related to Gaussian smoothing.

\section{Background and preliminary results}

In this section, we provide the necessary results and definitions required to prove the convergence of our anisotropic Gaussian smoothing algorithms.
As is typically the case with optimization results, we occasionally assume that $f$ is convex and often assume that $f$ is $L$-smooth; $f$ being $L$-smooth means that $\nabla f$ is $L$-Lipschitz.
Let $\Sigma\in\mathbb{R}^{d\times d}$ be invertible, then we denote the operator norm of $\Sigma$ as
\begin{equation}
    \|\Sigma\| = \max_{\|x\|\leq 1}\|\Sigma x\|.
\end{equation}
This means that $\|\Sigma\|$ is the value of the largest eigenvalue.

Our goal is to show that our anisotropic Gaussian smoothing algorithms converge by mimicking the proofs in the unsmoothed (this was done for the isotropic case in~\cite{gsgd,gssgd}).
In order to do this, we need to use a few results from~\cite{NesterovSpokoiny15} that show that applying Gaussian smoothing to a function preserves certain properties.

\begin{lem}
\label{lem:smoothing_maintains_properties_nesterov}
Let $f:\mathbb{R}^d\to\mathbb{R}$ and $\Sigma\in\mathbb{R}^{d\times d}$ be symmetric and invertible.
\begin{enumerate}[(a)]
    \item\label{item:lemma_a}  If $f$ is $L$-smooth then $f_{\Sigma}$ is also $L$-smooth.
    \item\label{item:lemma_b}\label{item:lemma_e}  If $f$ is convex, then $f_{\Sigma}$ is also convex and $f_{\Sigma}(x)\geq f(x)$.
    \item\label{item:lemma_c} If $f$ is $M$-Lipschitz then $f_{\Sigma}$ is $M$-Lipschitz and $M\sqrt{2d}\|\Sigma^{-1}\|$-smooth.
\end{enumerate}
\end{lem}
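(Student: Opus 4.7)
The common strategy for all three parts is to exploit the integral representation \eqref{eqn:definition_of_fsigma}, which expresses $f_{\Sigma}(\bmx)$ as a nonnegative-weighted average of translates of $f$, so pointwise properties of $f$ transfer to $f_{\Sigma}$ by integrating against the Gaussian weight. For (a), I would justify differentiating under the integral (standard dominated convergence) to obtain
\begin{equation}
\nabla f_{\Sigma}(\bmx) \;=\; \frac{1}{\pi^{d/2}} \int_{\mathbb{R}^d} \nabla f(\bmx + \Sigma \bmu)\, e^{-\|\bmu\|^2}\, d\bmu,
\end{equation}
subtract this identity at $\bmx$ and $\bmy$, pull the norm inside, and apply $L$-Lipschitzness of $\nabla f$ pointwise in $\bmu$; the leftover integral $\int e^{-\|\bmu\|^2} d\bmu = \pi^{d/2}$ cancels the prefactor to yield exactly the constant $L$.

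For (b), convexity of $f_{\Sigma}$ is immediate because, for each fixed $\bmu$, the map $\bmx \mapsto f(\bmx + \Sigma \bmu)$ is convex, and nonnegative-weighted averages of convex functions remain convex. The bound $f_{\Sigma}(\bmx) \geq f(\bmx)$ is Jensen's inequality applied to the density $\pi^{-d/2} e^{-\|\bmu\|^2}$: under this density the random vector $\bmu$ has mean zero, so $\mathbb{E}[\bmx + \Sigma \bmu] = \bmx$ and convexity yields $f(\bmx) \leq \mathbb{E}[f(\bmx + \Sigma \bmu)] = f_{\Sigma}(\bmx)$. The $M$-Lipschitz claim in (c) is the same ``pass the absolute value inside'' argument as (a), now with $f$ in place of $\nabla f$.

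The delicate part is the smoothness bound in (c). Since $f$ is only Lipschitz, I cannot place a derivative on $f$; instead I would use the alternative representation \eqref{eqn:gradient_equation}, in which the gradient has been moved onto the Gaussian kernel and an explicit $\Sigma^{-1}$ has appeared. Subtracting at $\bmx$ and $\bmy$ gives
\begin{equation}
\nabla f_{\Sigma}(\bmx) - \nabla f_{\Sigma}(\bmy) \;=\; \frac{2}{\pi^{d/2}} \Sigma^{-1} \int_{\mathbb{R}^d} \bmu \bigl[ f(\bmx + \Sigma \bmu) - f(\bmy + \Sigma \bmu) \bigr] e^{-\|\bmu\|^2}\, d\bmu.
\end{equation}
Taking norms, factoring out $\|\Sigma^{-1}\|$ and $M\|\bmx - \bmy\|$, and then bounding $\int \|\bmu\|\, e^{-\|\bmu\|^2} d\bmu$ by Cauchy--Schwarz against the Gaussian second moment $\int \|\bmu\|^2 e^{-\|\bmu\|^2} d\bmu = \tfrac{d}{2}\pi^{d/2}$ (since $\bmu$ has covariance $\tfrac{1}{2} I_d$ under that density) produces the claimed constant $M \sqrt{2d}\, \|\Sigma^{-1}\|$. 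The main obstacle is exactly this choice of representation: the ``derivative-inside'' formula is unavailable when $f$ is not differentiable, while the ``derivative-on-kernel'' formula introduces precisely the $\Sigma^{-1}$ factor whose operator norm furnishes the anisotropic analogue of the isotropic $1/\sigma$ dependence in Nesterov--Spokoiny.
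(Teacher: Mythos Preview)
Your proposal is correct and matches the paper's approach. For (a) and (b) the paper defers to Nesterov--Spokoiny exactly as you outline, and for the smoothness in (c) the paper uses precisely the representation \eqref{eqn:gradient_equation}, subtracts at $\bmx$ and $\bmy$, bounds $\|\Sigma^{-1}\bmu\|\leq\|\Sigma^{-1}\|\,\|\bmu\|$, applies $M$-Lipschitzness of $f$, and then controls $\tfrac{2}{\pi^{d/2}}\int\|\bmu\|e^{-\|\bmu\|^2}\,d\bmu$ by the same Cauchy--Schwarz estimate against the second moment that you describe, producing $\sqrt{2d}$.
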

In~\cite{NesterovSpokoiny15}, the norm used is $\|\bmx\|_{B}=\langle B\bmx,\bmx\rangle$ for a symmetric, positive definite matrix $B$.
Using this norm instead of the standard Euclidean norm we use here makes the notation simpler, but to some extent hides the role that $B$ plays.
Since we will be changing our smoothing matrix over iterations of the algorithm, we need to explicitly show the role that the linear transformation plays (i.e., we use $B=I_d$).
For example, compared to Lemma 2 of~\cite{NesterovSpokoiny15}, $\|\Sigma^{-1}\|$ takes the role of $\frac{1}{\mu}$ and $B^{-1}$ to explicitly show how the smoothing factor and the linear transformation impacts the smoothness.
Since the statement of this lemma is mostly the same as the results in~\cite{NesterovSpokoiny15}, we only include the proof that requires fundamental modification, specifically part (\ref{item:lemma_c}), in Appendix~\ref{app:background_results}.

Two additional results from~\cite{gsgd} show how the smoothed version of a function relates to the bounds of the original function.
The proofs do not need any changes other than in notation, so we refer the reader to~\cite{gsgd} for the proofs.

\begin{lem}
\label{lem:smoothing_maintains_properties_gsgd}
Let $f:\mathbb{R}^d\to\mathbb{R}$ and $\Sigma\in\mathbb{R}^{d\times d}$ be symmetric and invertible.
\begin{enumerate}[(a)]
    \item\label{item:lemma_f} Suppose $f^*=\inf_{\bmy\in\mathbb{R}^d}f(\bmy)>-\infty$. If $f_{\Sigma}(x_0)=f^*$ for some $x_0\in\mathbb{R}^d$, then $f$ is constant.
    \item\label{item:lemma_d} If $f$ is non-constant and either $f(\bmx)\geq m$ or $f(\bmx)\leq M$, then $f_{\Sigma}(\bmx)>m$ or $f_{\Sigma}(\bmx)<M$, respectively.
\end{enumerate}
\end{lem}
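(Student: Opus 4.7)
The plan is to exploit the two defining features of the smoothing integral $g_{\Sigma}(\bmx)=\frac{1}{\pi^{d/2}}\int g(\bmx+\Sigma\bmu)e^{-\|\bmu\|^{2}}d\bmu$: the weight $e^{-\|\bmu\|^{2}}$ is strictly positive on all of $\mathbb{R}^{d}$, and the shift map $\bmu\mapsto \bmx+\Sigma\bmu$ is a bijection of $\mathbb{R}^{d}$ since $\Sigma$ is invertible. Together these let me conclude that if the averaging of a nonnegative quantity produces zero, that quantity must vanish (almost) everywhere.

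For part (\ref{item:lemma_f}), the plan is to set $h(\bmu):=f(\bmx_{0}+\Sigma\bmu)-f^{*}$. By hypothesis $h\geq 0$ pointwise, and the assumption $f_{\Sigma}(\bmx_{0})=f^{*}$ rewrites as
\begin{equation}
0 \;=\; f_{\Sigma}(\bmx_{0})-f^{*} \;=\; \frac{1}{\pi^{d/2}}\int_{\mathbb{R}^{d}} h(\bmu)\, e^{-\|\bmu\|^{2}}\,d\bmu.
\end{equation}
Since $h(\bmu)e^{-\|\bmu\|^{2}}\geq 0$ and integrates to zero, it is zero a.e., and because $e^{-\|\bmu\|^{2}}>0$ we obtain $h\equiv 0$ a.e. Invertibility of $\Sigma$ (via the change of variables $\bmv=\bmx_{0}+\Sigma\bmu$) then gives $f(\bmv)=f^{*}$ for almost every $\bmv\in\mathbb{R}^{d}$; combined with continuity of $f$ (which is present throughout the paper, where $f$ is taken at least $L$-smooth), we conclude $f\equiv f^{*}$, i.e.\ $f$ is constant.

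For part (\ref{item:lemma_d}), I would argue by contrapositive using the same identity. Assuming $f(\bmx)\geq m$ everywhere, the inequality $f_{\Sigma}(\bmx)\geq m$ is immediate by monotonicity of the integral. If equality held at some $\bmx$, the argument from (\ref{item:lemma_f}) applied to $f-m$ in place of $f-f^{*}$ would force $f\equiv m$, contradicting non-constancy; hence the inequality is strict. The case $f(\bmx)\leq M$ is handled symmetrically by applying the strict-lower-bound argument to $-f$ and noting $(-f)_{\Sigma}=-f_{\Sigma}$.

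The main subtlety to watch is the passage from ``a.e.\ equality'' to pointwise constancy: this step silently uses continuity of $f$, which is justified by the regularity assumptions adopted elsewhere in the paper. Beyond this, the argument is essentially a uniqueness-of-zero-integrand observation and requires no new machinery beyond the integral representation \eqref{eqn:definition_of_fsigma}.
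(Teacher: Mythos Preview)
Your argument is correct and is the standard one: the Gaussian weight is strictly positive, so a nonnegative integrand with zero integral must vanish a.e., and invertibility of $\Sigma$ carries this back to $f$. The paper does not actually write out a proof of this lemma---it defers to \cite{gsgd} with the remark that only notational changes are needed---so there is no in-paper argument to compare against, but your approach is exactly the natural one and is essentially forced by the structure of the statement. Your flag about needing continuity to upgrade ``$f=f^*$ a.e.'' to ``$f$ is constant'' is well-taken; the lemma as stated imposes no regularity on $f$, so strictly speaking the conclusion holds a.e.\ without the ambient $L$-smoothness assumed elsewhere in the paper.
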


We now focus on how smoothing changes the function's output and gradient.
These results come from~\cite{NesterovSpokoiny15}, but as discussed before are written to include the full impact of $\Sigma$.
The proofs require a bit of care since we cannot factor out the $\Sigma$-terms but we could pull out terms of $\sigma>0$.
As such, we include the proofs of both parts in the Appendix~\ref{app:background_results}.

\begin{lem}
\label{lem:difference_f_and_fsigma}
If $f$ is $L$-smooth, then
\begin{enumerate}[(a)]
    \item\label{item:smooth_lemma_a} $|f_{\Sigma}(x)-f(x)|\leq\frac{Ld}{4}\|\Sigma\|^2$
    \item\label{item:smooth_lemma_b} $\|\nabla f_{\Sigma}(x)-\nabla f(x)\|\leq L\|\Sigma\|^2\|\Sigma^{-1}\|(\frac{3+d}{2})^{\frac{3}{2}}$
\end{enumerate}
\end{lem}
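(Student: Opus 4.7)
The plan is to prove both parts by combining the integral representations \eqref{eqn:definition_of_fsigma} and \eqref{eqn:gradient_equation} with a second-order Taylor expansion of $f$ and exploiting the symmetry of the Gaussian density. Only three Gaussian moment facts are needed: $\int_{\mathbb{R}^d} \bmu\, e^{-\|\bmu\|^2}d\bmu = 0$, $\int_{\mathbb{R}^d} \bmu\bmu^T e^{-\|\bmu\|^2}d\bmu = \frac{\pi^{d/2}}{2}I_d$, and the standard moment bound $\frac{1}{\pi^{d/2}}\int_{\mathbb{R}^d}\|\bmu\|^p e^{-\|\bmu\|^2}d\bmu \leq ((d+p)/2)^{p/2}$ for $p \geq 2$, all of which are already available in \cite{NesterovSpokoiny15}.

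For part (\ref{item:smooth_lemma_a}), I would start from
\[
f_\Sigma(\bmx) - f(\bmx) = \frac{1}{\pi^{d/2}}\int_{\mathbb{R}^d}\bigl[f(\bmx+\Sigma \bmu) - f(\bmx)\bigr] e^{-\|\bmu\|^2}\,d\bmu
\]
and apply the $L$-smooth descent inequality $|f(\bmx+\Sigma \bmu) - f(\bmx) - \langle \nabla f(\bmx), \Sigma \bmu\rangle| \leq \frac{L}{2}\|\Sigma \bmu\|^2$ pointwise in $\bmu$. The first moment identity kills the linear term, and bounding $\|\Sigma \bmu\|^2 \leq \|\Sigma\|^2\|\bmu\|^2$ reduces the remainder to the second moment of the Gaussian, producing $\frac{L}{2}\|\Sigma\|^2 \cdot \frac{d}{2} = \frac{Ld}{4}\|\Sigma\|^2$.

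Part (\ref{item:smooth_lemma_b}) is the delicate one. Using \eqref{eqn:gradient_equation} together with $\int \bmu e^{-\|\bmu\|^2}d\bmu = 0$, I would rewrite $\nabla f_\Sigma(\bmx) = \frac{2}{\pi^{d/2}}\Sigma^{-1}\int \bmu\bigl[f(\bmx+\Sigma\bmu)-f(\bmx)\bigr] e^{-\|\bmu\|^2}d\bmu$. Substituting the Taylor expansion $f(\bmx+\Sigma\bmu) - f(\bmx) = \langle \nabla f(\bmx), \Sigma\bmu\rangle + R(\bmu)$ with $|R(\bmu)|\leq \frac{L}{2}\|\Sigma\bmu\|^2$, the linear piece collapses to $\frac{2}{\pi^{d/2}}\Sigma^{-1}\bigl(\int \bmu\bmu^T e^{-\|\bmu\|^2}d\bmu\bigr)\Sigma\nabla f(\bmx) = \Sigma^{-1}\Sigma\nabla f(\bmx) = \nabla f(\bmx)$, using symmetry of $\Sigma$. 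Subtracting $\nabla f(\bmx)$ therefore leaves exactly the remainder $\frac{2}{\pi^{d/2}}\Sigma^{-1}\int \bmu R(\bmu) e^{-\|\bmu\|^2}d\bmu$, which the operator-norm bounds and the $p=3$ moment inequality control by $L\|\Sigma\|^2\|\Sigma^{-1}\|((d+3)/2)^{3/2}$.

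The main difficulty relative to the isotropic proof in \cite{NesterovSpokoiny15} is that $\Sigma$ cannot be factored out as a scalar, so I have to rely on the lossy operator-norm bounds $\|\Sigma\bmu\|\leq\|\Sigma\|\|\bmu\|$ and $\|\Sigma^{-1}\bmv\|\leq\|\Sigma^{-1}\|\|\bmv\|$ to separate $\Sigma$ from the Gaussian moments; this decoupling is precisely why (\ref{item:smooth_lemma_b}) carries both $\|\Sigma\|^2$ (from the quadratic Taylor remainder) and $\|\Sigma^{-1}\|$ (from the prefactor in \eqref{eqn:gradient_equation}), rather than a single combined factor. The first-order cancellation in (\ref{item:smooth_lemma_b}) still goes through cleanly because the Gaussian second-moment matrix is a scalar multiple of $I_d$ and therefore commutes with $\Sigma$; this is the point at which a different (non-Gaussian or non-symmetric) smoothing density would derail the argument.
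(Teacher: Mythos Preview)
Your proposal is correct and follows essentially the same approach as the paper's proof: in both parts you subtract the linear Taylor term (which vanishes by Gaussian symmetry), bound the quadratic remainder via the $L$-smooth inequality, and separate $\Sigma$ from the Gaussian moments using operator-norm bounds. Your handling of the linear term in (\ref{item:smooth_lemma_b}) is in fact slightly more transparent than the paper's---you pull $\Sigma^{-1}$ out and compute $\int \bmu\bmu^T e^{-\|\bmu\|^2}d\bmu=\tfrac{\pi^{d/2}}{2}I_d$ directly, whereas the paper reaches the same cancellation through an intermediate refactoring of $\Sigma^{-1}\bmu$---but the substance is identical.
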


Note that if $\Sigma=\sigma I_d$, then for (\ref{item:smooth_lemma_a}) $\|\Sigma\|^2=\sigma^2$ and for (\ref{item:smooth_lemma_b}) $\|\Sigma\|^2\|\Sigma^{-1}\|=\frac{1}{\sigma}$, so in both cases we recover the bound from~\cite{NesterovSpokoiny15}.
The final relationship between $\nabla f$ and $\nabla f_{\Sigma}$ we need follows immediately from part (\ref{item:smooth_lemma_b}) of the previous lemma.

\begin{cor}
\label{cor:nesterov_lemma_4_reversed}
\label{lem:nesterov_lemma_4}
Let $\Sigma$ be invertible and $f:\mathbb{R}^d\to\mathbb{R}$ be $L$-smooth.
Then
\begin{equation}
    \|\nabla f(\bmx)\|^2
    \leq 2\|\nabla f_{\Sigma}(\bmx)\|^2
    +\frac{L^2\|\Sigma\|^4\|\Sigma^{-1}\|^2(3+d)^3}{4}.
\end{equation}
The proof of this result can be found in Appendix~\ref{app:background_results} and also holds if we switch the roles of $f$ and $f_{\Sigma}$.
\end{cor}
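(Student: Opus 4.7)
The plan is to treat this as an essentially immediate consequence of Lemma~\ref{lem:difference_f_and_fsigma}(\ref{item:smooth_lemma_b}) combined with the standard ``squared triangle inequality'' $(a+b)^2 \leq 2a^2 + 2b^2$. The only work is bookkeeping on the constants to see that the $\left(\frac{3+d}{2}\right)^{3/2}$ factor in the gradient-difference bound squares nicely into the stated $\frac{(3+d)^3}{4}$.

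First I would write $\nabla f(\bmx) = \nabla f_{\Sigma}(\bmx) + \bigl(\nabla f(\bmx) - \nabla f_{\Sigma}(\bmx)\bigr)$, apply the triangle inequality to obtain
\begin{equation}
\|\nabla f(\bmx)\| \leq \|\nabla f_{\Sigma}(\bmx)\| + \|\nabla f(\bmx) - \nabla f_{\Sigma}(\bmx)\|,
\end{equation}
then square and use $(\alpha+\beta)^2 \leq 2\alpha^2 + 2\beta^2$ to get
\begin{equation}
\|\nabla f(\bmx)\|^2 \leq 2\|\nabla f_{\Sigma}(\bmx)\|^2 + 2\|\nabla f(\bmx) - \nabla f_{\Sigma}(\bmx)\|^2.
\end{equation}
Next I would invoke Lemma~\ref{lem:difference_f_and_fsigma}(\ref{item:smooth_lemma_b}) to bound the second term by $2 L^2 \|\Sigma\|^4 \|\Sigma^{-1}\|^2 \left(\frac{3+d}{2}\right)^{3}$, and simplify $2 \cdot \frac{(3+d)^3}{8} = \frac{(3+d)^3}{4}$ to recover the stated inequality.

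For the final sentence of the corollary (``the result also holds if we switch the roles of $f$ and $f_{\Sigma}$''), I would observe that the same argument works symmetrically: by Lemma~\ref{lem:smoothing_maintains_properties_nesterov}(\ref{item:lemma_a}), $f_\Sigma$ is itself $L$-smooth, and the gradient-difference bound in Lemma~\ref{lem:difference_f_and_fsigma}(\ref{item:smooth_lemma_b}) is symmetric in $f$ and $f_\Sigma$, so running the triangle-inequality step with the roles swapped gives
\begin{equation}
\|\nabla f_{\Sigma}(\bmx)\|^2 \leq 2\|\nabla f(\bmx)\|^2 + \frac{L^2\|\Sigma\|^4\|\Sigma^{-1}\|^2(3+d)^3}{4}.
\end{equation}

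There is no real obstacle here; the only thing to be mindful of is keeping the $\|\Sigma\|$ and $\|\Sigma^{-1}\|$ factors separate (since, unlike in the isotropic case $\Sigma = \sigma I_d$, they do not collapse) and verifying the arithmetic $2 \cdot 2^{-3} = 4^{-1}$ on the constant. Everything else is a direct application of previously stated results.
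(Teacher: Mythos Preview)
Your proposal is correct and follows essentially the same approach as the paper: write one gradient as the other plus their difference, apply the inequality $\|a+b\|^2 \leq 2\|a\|^2 + 2\|b\|^2$, and invoke Lemma~\ref{lem:difference_f_and_fsigma}(\ref{item:smooth_lemma_b}) to bound the squared difference. The paper's appendix actually writes out the ``switched'' direction (bounding $\|\nabla f_\Sigma\|^2$ by $\|\nabla f\|^2$) rather than the one displayed in the corollary, but as you note the argument is symmetric, so there is no substantive difference.
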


This result is similar to Lemma 4 of~\cite{NesterovSpokoiny15}.
The primary difference is that the role $\Sigma^{-1}$ plays is explicitly shown here and in~\cite{NesterovSpokoiny15} is veiled in the norm.
As with the previous lemma, if $\Sigma=\sigma I_d$, then $\|\Sigma\|^4\|\Sigma^{-1}\|^2=\sigma^2$ and we again have the bound from~\cite{NesterovSpokoiny15}.

Finally, we focus on switching our smoothing matrix.
In the isotropic case, if $\tau>\sigma>0$, then $f_{\tau}=(f_{\sigma})_{\eta}$ where $\eta=\sqrt{\tau^2-\sigma^2}$.
This means we can use bounds from the previous results where $f_{\sigma}$ replaces $f$ and we smooth by $\eta$ (see~\cite{gsgd} Lemma 2.7).
In the anisotropic case, given symmetric, invertible $\Sigma$ and $\Tau$, it is not necessarily the case that we can find a symmetric, invertible $\Eta$ so that $(f_{\Sigma})_{H}=f_{\Tau}$ or vice-versa.
In particular, we know if $(f_{\Sigma})_\Eta=f_{\Tau}$, then $\Tau^2=\Sigma^2+\Eta^2$.
Depending on $\Sigma$ and $\Tau$, no such positive definite $\Eta$ may exist.
For example, $\Sigma$ and $\Tau$ could each primarily smooth in directions that are orthogonal to each other.
In this case, we would have to unsmooth and then resmooth, which is not possible.
On the other hand, as shown in part~\ref{item:double_lemma_a}, given $\Sigma$ and $\Tau$, there is always a (unique) matrix $\Eta$ so that $(f_{\Sigma})_{\Tau}=f_{\Eta}$.

\begin{lem}
\label{lem:different_smoothing_values}
Let $\Sigma,\Tau\in\mathbb{R}^{d\times d}$ be symmetric and invertible.
\begin{enumerate}[(a)]
    \item\label{item:double_lemma_a} Let $\Eta=\sqrt{\Sigma^2+\Tau^2}$, then $f_{\Eta}(x)=(f_{\Sigma})_{\Tau}(\bmx)$.
    \item\label{item:double_lemma_b} If $\Sigma$ and $\Tau$ diagonalize together, then
    \begin{equation}
        |f_{\Sigma}(\bmx)-f_{\Tau}(\bmx)|
        \leq\frac{Ld}{4}\left(
            \left(\max_{1\leq i\leq d}(0,\sigma_i-\tau_i)\right)^2
            +\left(\max_{1\leq i\leq d}(0,\tau_i-\sigma_i)\right)^2
        \right).
    \end{equation}
    Otherwise
    \begin{equation}
        |f_{\Sigma}(\bmx)-f_{T}(\bmx)|
        \leq\frac{Ld}{4}\left(
            \|\Sigma^2\|
            +\|T^2\|
            -2\min_{1\leq i\leq d}(\sigma_i,\tau_i)
        \right)
    \end{equation}
    \item\label{item:double_lemma_c} If $\Tau^2-\Sigma^2$ or $\Sigma^2-\Tau^2$ is positive semi-definite, then $|f_{\Sigma}(\bmx)-f_{\Tau}(\bmx)|\leq\frac{Ld}{4}\|\Tau^2-\Sigma^2\|$.
\end{enumerate}
\end{lem}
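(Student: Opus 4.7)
The plan is to prove (a) first via the semigroup property of Gaussian convolution, then to deploy (a) as the workhorse for both (c) and (b) by expressing one smoothed function as a further smoothing of another.

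For (a), the formulation $g_\Sigma = g \star k_\Sigma$ from~\eqref{eqn:change_of_variables} and associativity of convolution give $(f_\Sigma)_T = f \star (k_\Sigma \star k_T)$, so it suffices to show $k_\Sigma \star k_T = k_H$ with $H = \sqrt{\Sigma^2 + T^2}$. This is the classical identity that a convolution of centered Gaussians is a Gaussian with the sum of the covariance parameters. I would complete the square in the $d$-dimensional quadratic form in the product of the two kernel densities, integrate out the resulting inner Gaussian, and track the determinants: the matrix of the remaining exponent becomes $(\Sigma^2 + T^2)^{-1}$ and the normalizing constant matches $\pi^{d/2}|H|$, identifying the convolution as $k_H$. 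Since $\Sigma, T$ are symmetric and invertible, $\Sigma^2 + T^2$ is positive definite, so its PSD square root $H$ is symmetric and invertible, as required by the framework.

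Part (c) is then essentially one line. If $T^2 - \Sigma^2$ is positive semi-definite, let $H$ be its PSD square root, so $T^2 = \Sigma^2 + H^2$. By (a), $f_T = (f_\Sigma)_H$. Since $f_\Sigma$ is $L$-smooth by Lemma~\ref{lem:smoothing_maintains_properties_nesterov}(\ref{item:lemma_a}), Lemma~\ref{lem:difference_f_and_fsigma}(\ref{item:smooth_lemma_a}) applied to $f_\Sigma$ with smoothing matrix $H$ yields $|f_\Sigma(x) - f_T(x)| \leq \tfrac{Ld}{4}\|H\|^2 = \tfrac{Ld}{4}\|T^2 - \Sigma^2\|$. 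If $H$ is singular, I would apply the bound to $H_\varepsilon = \sqrt{H^2 + \varepsilon I_d}$ (which is invertible) and send $\varepsilon \to 0^+$ using dominated convergence in~\eqref{eqn:definition_of_fsigma}. The case $\Sigma^2 - T^2$ PSD is symmetric.

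For (b), the unifying strategy is to insert an intermediate matrix $M$ so that both $\Sigma^2 - M^2$ and $T^2 - M^2$ are PSD. Then (a) yields $f_\Sigma = (f_M)_A$ and $f_T = (f_M)_B$ for their PSD square roots $A,B$; since $f_M$ is $L$-smooth by Lemma~\ref{lem:smoothing_maintains_properties_nesterov}(\ref{item:lemma_a}), two applications of Lemma~\ref{lem:difference_f_and_fsigma}(\ref{item:smooth_lemma_a}) with the triangle inequality give $|f_\Sigma(x) - f_T(x)| \leq \tfrac{Ld}{4}(\|A\|^2 + \|B\|^2)$. When $\Sigma$ and $T$ share an eigenbasis, choose $M$ with that shared eigenbasis and eigenvalues $\min(\sigma_i, \tau_i)$; then $\|A\|^2 = \max_i \max(0, \sigma_i^2 - \tau_i^2)$ and similarly for $\|B\|^2$, which reorganizes into the stated first bound after rewriting in terms of the eigenvalue differences $\sigma_i - \tau_i$. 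In the general non-commuting case, no such $M$ with matching eigenbases exists, so instead take $M = m I_d$ with $m = \min_i \min(\sigma_i, \tau_i)$; then $m^2 I_d$ commutes with everything, $\Sigma^2 - m^2 I_d$ and $T^2 - m^2 I_d$ are PSD, and $\|A\|^2 = \|\Sigma^2\| - m^2$, $\|B\|^2 = \|T^2\| - m^2$, matching the stated second bound.

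The main obstacle is part (a): the $d$-dimensional Gaussian convolution identity with matrix-valued covariances (rather than scalar variances) requires an honest completion of the square in matrix form and careful bookkeeping of determinants. Once (a) is in hand, (c) is immediate and (b) reduces to choosing the right intermediate $M$ in each case. A minor secondary subtlety is the $\varepsilon$-approximation needed in (c) when the PSD square root is singular, which is routine.
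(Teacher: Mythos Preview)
Your proposal follows essentially the same route as the paper: part~(a) via associativity of convolution and the identity $k_\Sigma \star k_T = k_H$ (the paper cites the probabilistic fact that independent Gaussians sum to a Gaussian rather than completing the square, but the content is identical); part~(c) by writing $f_T = (f_\Sigma)_H$ with $H = \sqrt{T^2 - \Sigma^2}$ and invoking Lemma~\ref{lem:difference_f_and_fsigma}(\ref{item:smooth_lemma_a}); and part~(b) by inserting an intermediate $M$ with eigenvalues $\min(\sigma_i,\tau_i)$ in the simultaneously diagonalizable case and $M = \alpha I_d$ with $\alpha$ the overall minimum eigenvalue otherwise, then applying the triangle inequality through $f_M$.

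One small caveat on the bookkeeping in~(b): in the paper's notation the $\sigma_i,\tau_i$ appearing in the statement are the eigenvalues of $\Sigma^2,T^2$ (this is how the proof sets up $D_\Sigma,D_T$), so your computed $\|A\|^2 = \max_i \max(0,\sigma_i^2 - \tau_i^2)$ under the reading ``$\sigma_i$ is an eigenvalue of $\Sigma$'' does \emph{not} reorganize into $(\max_i \max(0,\sigma_i - \tau_i))^2$ --- those quantities genuinely differ. The correct resolution is simply to adopt the paper's convention, after which your $\|A\|^2$ and $\|B\|^2$ match the stated bounds directly with no further manipulation; the same remark applies to the $-2\min_i(\sigma_i,\tau_i)$ term in the non-commuting case, which under your reading would come out as $-2m^2$.
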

\noindent This lemma highlights the complexity that comes from smoothing with varying matrices compared to varying scalars and its proof can be found in Appendix~\ref{app:background_results}.
Specifically, parts (\ref{item:double_lemma_b}) and (\ref{item:double_lemma_c}) of the preceding lemma show how much more complicated bounding $|f_{\Sigma}(\bmx)-f_{\Tau}(\bmx)|$ is in the anisotropic case compared to the isotropic setting.

\section{Anisotropic Gaussian smoothing gradient descent}

\begin{algorithm}[tb]
    \caption{Anisotropic Gaussian Smoothing Gradient Descent}
    \label{alg:agsgd}
    \begin{algorithmic}[1]
        \Require $f:\mathbb{R}^d\to\mathbb{R}$, $\bmx_0\in\mathbb{R}^d$, $\lrate>0$
        \Require $\Sigma_t\in\mathbb{R}^{d\times d}$ symmetric, invertible matrices for $t=1,...,T$
        \For{$t=1\to T$}
            \State $\bmx_t = \bmx_{t-1} - \lrate\nabla f_{\Sigma_t}(\bmx_{t-1})$
        \EndFor 
    \end{algorithmic}
\end{algorithm}

We are now able to prove results about the convergence of \agsgd (Algorithm~\ref{alg:agsgd}).
As mentioned before, we adapt the gradient descent rule that uses $\nabla f(\bmx_t)$ so that it uses the gradient of the smoothed function $\nabla f_{\sigma_{t+1}}(\bmx_t)$, that is, we use the update rule
\begin{equation}
    \bmx_{t+1}=\bmx_t-\lambda\nabla f_{\sigma_{t+1}}(\bmx_t).
\end{equation}
As with the standard results for gradient descent, we show convergence results for both convex and non-convex, $L$-smooth functions.
In both of the convergence results, we need to switch from $f_{\sigma_t}$ to $f_{\sigma_{t+1}}$, which requires bounding their difference.
In order to keep the results as general as possible, we denote the bound as $B(\Sigma_{t},\Sigma_{t+1})$, since by Lemma~\ref{lem:different_smoothing_values}, it can vary depending on $\Sigma_t$ and $\Sigma_{t+1}$.
We begin this section with the convergence results for $L$-smooth, convex functions.

\begin{thm}
\label{thm:AGSmoothGD_convex}
Consider \agsgd in Algorithm~\ref{alg:agsgd}.
Let $f:\mathbb{R}^d\to\mathbb{R}$ be convex and $L$-smooth with a minimizer $\xstar$.
Then
\begin{equation}
    f(\bmx_{T})-f(\xstar)
    \leq\frac{1}{2T}\|\bmx_0-\xstar\|^2+\frac{1}{T}\left(
        \frac{Ld}{4}\sum_{t=1}^{T}\|\Sigma_{t}\|^2+\sum_{t=1}^{T-1}tB(\Sigma_{T-t},\Sigma_{T-t+1})
    \right),
\end{equation}
where $B(\Sigma,\Tau)$ satisfies $|f_{\Sigma}(\bmx)-f_{\Tau}(\bmx)|\leq B(\Sigma,\Tau)$.
\end{thm}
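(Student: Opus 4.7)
The plan is to mimic the standard textbook proof that gradient descent with step size $\lambda = 1/L$ converges at rate $O(1/T)$ on convex, $L$-smooth functions, with the twist that the target at iteration $t$ is $f_{\Sigma_t}$ rather than $f$ itself. By Lemma~\ref{lem:smoothing_maintains_properties_nesterov}(\ref{item:lemma_a},\ref{item:lemma_b}) each $f_{\Sigma_t}$ is convex and $L$-smooth, so the classical one-step analysis applies unchanged to the update $\bmx_t = \bmx_{t-1} - \lambda \nabla f_{\Sigma_t}(\bmx_{t-1})$. Combining the descent lemma on $f_{\Sigma_t}$ with the convexity inequality $f_{\Sigma_t}(\bmx_{t-1}) - f_{\Sigma_t}(\xstar) \leq \langle \nabla f_{\Sigma_t}(\bmx_{t-1}),\, \bmx_{t-1} - \xstar\rangle$ and choosing $\lambda = 1/L$, I obtain the standard one-step contraction
\begin{equation*}
2\lambda\,\bigl(f_{\Sigma_t}(\bmx_t) - f_{\Sigma_t}(\xstar)\bigr) \;\leq\; \|\bmx_{t-1} - \xstar\|^2 - \|\bmx_t - \xstar\|^2,
\end{equation*}
which telescopes over $t=1,\dots,T$ to $2\lambda\sum_{t=1}^T \bigl(f_{\Sigma_t}(\bmx_t) - f_{\Sigma_t}(\xstar)\bigr) \leq \|\bmx_0 - \xstar\|^2$.

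The rest is bookkeeping to convert smoothed quantities into unsmoothed ones. The minimizer side is straightforward: Lemma~\ref{lem:difference_f_and_fsigma}(\ref{item:smooth_lemma_a}) gives $f_{\Sigma_t}(\xstar) \leq f(\xstar) + \tfrac{Ld}{4}\|\Sigma_t\|^2$, which when summed contributes the $\tfrac{Ld}{4}\sum_t\|\Sigma_t\|^2$ term. The iterate side requires a chain. Descent on $f_{\Sigma_{s+1}}$ gives $f_{\Sigma_{s+1}}(\bmx_{s+1}) \leq f_{\Sigma_{s+1}}(\bmx_s)$, and the definition of $B$ (via Lemma~\ref{lem:different_smoothing_values}) gives $f_{\Sigma_{s+1}}(\bmx_s) - f_{\Sigma_s}(\bmx_s) \leq B(\Sigma_s,\Sigma_{s+1})$, so
\begin{equation*}
f_{\Sigma_{s+1}}(\bmx_{s+1}) - f_{\Sigma_s}(\bmx_s) \;\leq\; B(\Sigma_s,\Sigma_{s+1}).
\end{equation*}
Telescoping this from $s=t$ up to $s=T-1$, then applying $f(\bmx_T) \leq f_{\Sigma_T}(\bmx_T)$ from Lemma~\ref{lem:smoothing_maintains_properties_nesterov}(\ref{item:lemma_b}), yields $f(\bmx_T) \leq f_{\Sigma_t}(\bmx_t) + \sum_{s=t}^{T-1} B(\Sigma_s,\Sigma_{s+1})$. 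Summing this inequality over $t=1,\dots,T$, exchanging the order of summation, and combining with the telescoped contraction and the Lemma~\ref{lem:difference_f_and_fsigma} bound yields the stated inequality after rearrangement.

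The key technical obstacle is the chain across changing smoothing matrices. In the scalar isotropic case one has the semigroup identity $f_\tau = (f_\sigma)_{\sqrt{\tau^2-\sigma^2}}$ for $\tau>\sigma$, which enables a single clean transition between any two smoothings; as the discussion preceding Lemma~\ref{lem:different_smoothing_values} stresses, no analogous matrix identity holds once $\Sigma_{t+1}^2-\Sigma_t^2$ fails to be positive semidefinite or the $\Sigma_t$ do not share an eigenbasis. The quantity $B(\Sigma_s,\Sigma_{s+1})$ is the correct substitute, but one must invoke it once per iteration in the telescoping chain, and the delicate combinatorial step is to verify that these per-step penalties aggregate into exactly the weighted double sum in the theorem. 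All remaining pieces — the one-step contraction, the $f_{\Sigma_t}(\xstar)$ correction, and the convexity inequality $f(\bmx_T) \leq f_{\Sigma_T}(\bmx_T)$ — follow immediately from the preliminary lemmas.
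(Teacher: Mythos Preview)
Your proposal is correct and follows essentially the same route as the paper: the one-step contraction on $f_{\Sigma_t}$ via $L$-smoothness and convexity, telescoping over $t$, the $\tfrac{Ld}{4}\|\Sigma_t\|^2$ correction at $\xstar$ from Lemma~\ref{lem:difference_f_and_fsigma}(\ref{item:smooth_lemma_a}), the chain $f_{\Sigma_{s+1}}(\bmx_{s+1})\le f_{\Sigma_s}(\bmx_s)+B(\Sigma_s,\Sigma_{s+1})$, and the final use of $f(\bmx_T)\le f_{\Sigma_T}(\bmx_T)$. One minor bookkeeping remark: exchanging the order of summation in your double sum gives $\sum_{s=1}^{T-1}s\,B(\Sigma_s,\Sigma_{s+1})$, which is exactly what the paper's own proof derives as well (before its last reindexing step), so any cosmetic discrepancy with the displayed form $\sum_{t=1}^{T-1}t\,B(\Sigma_{T-t},\Sigma_{T-t+1})$ is an indexing slip in the paper, not in your argument.
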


If we replace $\Sigma_t$ with 0, we end up with the regular convergence result for gradient descent.
On the other hand, if we replace $\Sigma_t$ with $\sigma_tI_d$, we recover the results from~\cite{gsgd}.
As such, we really have generalized the results from both of these cases.

The proof can be found in Appendix~\ref{app:agsgd_proofs} and is a modification of the proof in~\cite{gsgd}.
The key differences come from the complexity that arises when we switch from $\sigma\geq 0$ to $\Sigma\in\mathbb{R}^{d\times d}$.
The majority of the work required to make this change comes from the results in the previous section (e.g., Lemma~\ref{lem:different_smoothing_values}).

When switching from convex to non-convex functions, we no longer know that smoothing increases the function values (i.e., $f_{\Sigma}(\bmx)\geq f(\bmx)$).
Nevertheless, we can once again mimic the proof in~\cite{gsgd} (which itself mimics the standard gradient descent proof for non-convex functions) in order to see a minimum gradient norm-type convergence.

\begin{thm}
\label{thm:AGSmoothGD_nonconvex}
Consider \agsgd in Algorithm~\ref{alg:agsgd}.
Let $f:\mathbb{R}^d\to\mathbb{R}$ be $L$-smooth with minimum $f^*$.
Then
\begin{multline}
    \min_{t=1,...,T}\|\nabla f(\bmx_t)\|^2
    \leq\frac{4}{T\lrate}\big(f(\bmx_{0})-f^*\big)\\
        +\frac{L^2}{T}\left(\frac{6+d}{2}\right)^3\sum_{t=1}^{T}\|\Sigma_t\|^4\|\Sigma_t^{-1}\|^2
        +\frac{4}{T\lrate}\sum_{t=0}^{T}B(\Sigma_{t+1},\Sigma_{t}),
\end{multline}
where $B(\Sigma,\Tau)$ satisfies $|f_{\Sigma}(\bmx)-f_{\Tau}(\bmx)|\leq B(\Sigma,\Tau)$.
\end{thm}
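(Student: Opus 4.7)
The plan is to adapt the standard non-convex gradient descent argument to the smoothed setting, using the $L$-smoothness of each surrogate $f_{\Sigma_t}$ from Lemma~\ref{lem:smoothing_maintains_properties_nesterov}(\ref{item:lemma_a}) in place of the $L$-smoothness of $f$. Given the update $\bmx_t = \bmx_{t-1} - \lrate\nabla f_{\Sigma_t}(\bmx_{t-1})$, the usual descent lemma applied to $f_{\Sigma_t}$ at $\bmx_{t-1}$ and $\bmx_t$ yields
\begin{equation}
\lrate\Bigl(1-\tfrac{L\lrate}{2}\Bigr)\|\nabla f_{\Sigma_t}(\bmx_{t-1})\|^2 \;\leq\; f_{\Sigma_t}(\bmx_{t-1}) - f_{\Sigma_t}(\bmx_t),
\end{equation}
so that per-iterate drops in $f_{\Sigma_t}$ control $\|\nabla f_{\Sigma_t}(\bmx_{t-1})\|^2$.

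Next I would sum this for $t=1,\ldots,T$. Since consecutive summands use different smoothings, the sum does not telescope directly; inserting the switching estimate $f_{\Sigma_{t+1}}(\bmx_t) \leq f_{\Sigma_t}(\bmx_t) + B(\Sigma_{t+1},\Sigma_t)$ between iterations produces a telescoping chain plus a residual $\sum_{t=1}^{T-1} B(\Sigma_{t+1},\Sigma_t)$. The remaining boundary values $f_{\Sigma_1}(\bmx_0)$ and $-f_{\Sigma_T}(\bmx_T)$ are then converted to $f(\bmx_0)$ and $-f^*$ via Lemma~\ref{lem:difference_f_and_fsigma}(\ref{item:smooth_lemma_a}); under the natural convention $\Sigma_0 = \Sigma_{T+1} = 0$, these two extra inequalities contribute exactly $B(\Sigma_1,\Sigma_0)$ and $B(\Sigma_{T+1},\Sigma_T)$, filling out the full sum $\sum_{t=0}^{T} B(\Sigma_{t+1},\Sigma_t)$ that appears in the statement.

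At this point I have a telescoped bound on $\sum_{t=1}^{T}\|\nabla f_{\Sigma_t}(\bmx_{t-1})\|^2$. To convert these smoothed gradients into $\|\nabla f\|^2$ terms, I would apply Corollary~\ref{cor:nesterov_lemma_4_reversed} termwise, picking up the $\sum_{t=1}^{T}\|\Sigma_t\|^4\|\Sigma_t^{-1}\|^2$ term with the $d$-dependent constant. Restricting to $\lrate \leq 1/L$, so that $1 - L\lrate/2 \geq 1/2$, folds the prefactor into the clean coefficient $4/(T\lrate)$; dividing by $T$ and using $\min \leq \mathrm{avg}$ then yields the stated bound.

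I expect the main obstacle to be the bookkeeping around the boundary and switching terms. Unlike the convex case treated in Theorem~\ref{thm:AGSmoothGD_convex}, we cannot invoke $f_{\Sigma}(\bmx) \geq f(\bmx)$, so Lemma~\ref{lem:difference_f_and_fsigma}(\ref{item:smooth_lemma_a}) must be applied in opposite directions at the two endpoints of the telescoping sum, and the convention that extends $B$ to a zero argument needs to be made explicit to match the index range $t=0,\ldots,T$ in the statement. There is also a minor indexing issue: the descent inequality naturally produces a bound on $\|\nabla f(\bmx_{t-1})\|^2$ for $t=1,\ldots,T$, that is, on the iterates $\bmx_0,\ldots,\bmx_{T-1}$, so the $\min$ in the statement is over a one-step shift and one must either reindex or absorb an extra boundary contribution into the final $B$-sum. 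Tracking the constants from Corollary~\ref{cor:nesterov_lemma_4_reversed} together with the step-size assumption then produces the $d$-dependent factor multiplying $\sum_t \|\Sigma_t\|^4\|\Sigma_t^{-1}\|^2$.
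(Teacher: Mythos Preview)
Your proposal is correct and follows essentially the same route as the paper: apply the descent lemma to each $f_{\Sigma_t}$, insert the switching bound $B(\Sigma_{t+1},\Sigma_t)$ to telescope, convert the endpoints to $f(\bmx_0)-f^*$, and then use Corollary~\ref{cor:nesterov_lemma_4_reversed} termwise to pass from $\|\nabla f_{\Sigma_t}\|^2$ to $\|\nabla f\|^2$ before taking $\min\leq\mathrm{avg}$. You are in fact more explicit than the paper about the implicit step-size restriction $\lrate\leq 1/L$, the convention $\Sigma_0=\Sigma_{T+1}=0$ for the boundary $B$-terms, and the one-step index shift between the iterates produced by the descent inequality and those appearing in the statement.
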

As with the previous theorem, we can recover the results from unsmoothed gradient descent ($\Sigma_t$ replaced with 0) and~\cite{gsgd} ($\Sigma_t$ replaced with $\sigma_tI_d$).
The proof can be found in Appendix~\ref{app:agsgd_proofs}.

In both theorems of this section, the second term on the right hand sides indicate the inherent cost of smoothing, which is positive whenever $\Sigma_t\not=0$.
The last terms show the cost of varying the smoothing matrices.
In both cases, as $T\to\infty$, the right hand sides both converge to 0 when $(\|\Sigma_t\|)_{t\geq 1}$ is bounded.
While we see the theoretical cost of smoothing, experimentally using smoothing we see faster convergence.

\section{Anisotropic Gaussian smoothing stochastic gradient descent}

\begin{algorithm}[tb]
    \caption{Anisotropic Gaussian Smoothing Stochastic Gradient Descent}
    \label{alg:agssgd}
    \begin{algorithmic}[1]
        \Require $f:\mathbb{R}^d\to\mathbb{R}$, $\bmx_0\in\mathbb{R}^d$, $\lrate>0$
        \Require $\Sigma_t\in\mathbb{R}^{d\times d}$ symmetric, invertible matrices for $t=1,...,T$
        \For{$t=1\to T$}
            \State $k_t\sim\text{Unif}([K])$
            \State $\bmx_t = \bmx_{t-1} - \lrate\nabla f_{k_t,\Sigma_t}(\bmx_{t-1})$
        \EndFor 
    \end{algorithmic}
\end{algorithm}

We now prove the stochastic convergence results for non-convex, $L$-smooth functions.
As with the gradient descent case, for different smoothing values of $\Sigma_t$, we can recover the convergence results for both SGD and GSmoothSGD.
To adapt SGD, we use the update
\begin{equation}
    \bmx_{t+1}=\bmx_t-\lambda_{t+1}\nabla f_{k_{t+1},\sigma_{t+1}}(\bmx_t),
\end{equation}
replacing $\nabla f_{k_{t+1}}(\bmx_t)$ in SGD with its smoothed counterpart.
The full details can be found in Algorithm~\ref{alg:agssgd} and the proof of the following theorem can be found in Appendix~\ref{app:agssgd_proofs}.

\begin{thm}
\label{thm:GSSGD}
Consider Anisotropic GSmoothSGD in Algorithm~\ref{alg:agssgd}.
Assume that $f$ is $L$-smooth in $\bmx$ and $\mathbb{E}\big[\nabla f_k(\bmx)\big]=\nabla f(\bmx)$.
Let $f^*$ denote the minimum of $f$.
Suppose that $\mathbb{E}\big[\|\nabla f_k\|^2\big]\leq\lambda$ for any $k\in [K]$. 
Then for some $\nu<T$, 
\begin{multline}
\label{eqn:thm_gssgd}
    \mathbb{E}\big[\|\nabla f_{\Sigma_{\nu+1}}(\bmx_\nu)\|^2\big]
    \leq
        \frac{f(\bmx_0)-f_*}{\sum_{t=1}^{T}\eta_t}
        +\frac{L\lambda^2\sum_{t=1}^{T}\eta_t^2}{\sum_{t=1}^{T}\eta_t}\\
        +\frac{L^3(3+d)^3}{8}\frac{\sum_{t=1}^{T}\|\Sigma_t\|^4\|\Sigma_t^{-1}\|^2\eta_t^2}{\sum_{t=1}^{T}\eta_t}
        +\frac{\sum_{t=1}^{T}B(\Sigma_{t},\Sigma_{t-1})}{\sum_{t=1}^{T}\eta_t}
\end{multline}
\end{thm}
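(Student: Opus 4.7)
The plan is to mimic the standard non-convex SGD descent analysis, treating the $L$-smooth (by Lemma~\ref{lem:smoothing_maintains_properties_nesterov}(\ref{item:lemma_a})) surrogate $f_{\Sigma_t}$ as the effective objective at iteration $t$. Starting from the descent inequality for $f_{\Sigma_t}$ applied along the AGS-SGD step $\bmx_t = \bmx_{t-1} - \eta_t \nabla f_{k_t,\Sigma_t}(\bmx_{t-1})$, I would obtain
\begin{equation*}
    f_{\Sigma_t}(\bmx_t) \leq f_{\Sigma_t}(\bmx_{t-1}) - \eta_t \langle \nabla f_{\Sigma_t}(\bmx_{t-1}), \nabla f_{k_t,\Sigma_t}(\bmx_{t-1})\rangle + \tfrac{L\eta_t^2}{2}\|\nabla f_{k_t,\Sigma_t}(\bmx_{t-1})\|^2.
\end{equation*}

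Taking conditional expectation over $k_t$ and using the unbiasedness $\mathbb{E}[\nabla f_{k,\Sigma}(\bmx)] = \nabla f_\Sigma(\bmx)$ (which follows from $\mathbb{E}[\nabla f_k]=\nabla f$, the integral representation \eqref{eqn:gradient_equation}, and Fubini--Tonelli), the cross term collapses to $\eta_t \|\nabla f_{\Sigma_t}(\bmx_{t-1})\|^2$. To bound the squared-norm remainder I would apply Corollary~\ref{cor:nesterov_lemma_4_reversed} in its reversed form, with $f$ replaced by $f_{k_t}$, to get
\begin{equation*}
    \|\nabla f_{k_t,\Sigma_t}(\bmx_{t-1})\|^2 \leq 2\|\nabla f_{k_t}(\bmx_{t-1})\|^2 + \tfrac{L^2(3+d)^3}{4}\|\Sigma_t\|^4\|\Sigma_t^{-1}\|^2,
\end{equation*}
and then invoke the hypothesis $\mathbb{E}[\|\nabla f_k\|^2] \leq \lambda$ on the first summand. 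This produces precisely the $L\lambda\eta_t^2$ (matching the $L\lambda^2\eta_t^2$ entry of \eqref{eqn:thm_gssgd} up to notation) and $\tfrac{L^3(3+d)^3}{8}\|\Sigma_t\|^4\|\Sigma_t^{-1}\|^2\eta_t^2$ contributions.

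Rearranging and summing over $t=1,\ldots,T$ would telescope the $f_{\Sigma_t}(\bmx_{t-1}) - f_{\Sigma_t}(\bmx_t)$ chain, but only after realigning consecutive indices: to link $f_{\Sigma_t}(\bmx_t)$ with $f_{\Sigma_{t+1}}(\bmx_t)$ I would invoke Lemma~\ref{lem:different_smoothing_values}, which gives $|f_{\Sigma_t}(\bmx_t) - f_{\Sigma_{t+1}}(\bmx_t)| \leq B(\Sigma_{t+1},\Sigma_t)$ regardless of whether the two smoothing matrices commute. The surviving boundary pair $f_{\Sigma_1}(\bmx_0) - f_{\Sigma_{T+1}}(\bmx_T)$ is then bounded by $f(\bmx_0) - f^*$, using Lemma~\ref{lem:smoothing_maintains_properties_gsgd}(\ref{item:lemma_d}) (or equivalently absorbing $f^*$ via a limit) for the lower tail and folding the $t=0$ boundary mismatch into the $\sum B(\Sigma_t,\Sigma_{t-1})$ pool. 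Dividing through by $\sum_{t=1}^T \eta_t$ and taking $\nu$ to be the index that minimizes $\mathbb{E}[\|\nabla f_{\Sigma_{\nu+1}}(\bmx_\nu)\|^2]$—so that the minimum is at most the $\eta$-weighted average—yields \eqref{eqn:thm_gssgd}.

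The main obstacle is the telescoping step. In the isotropic GSmoothSGD analysis of \cite{gssgd} one can write $f_{\sigma_{t+1}}$ as an additional scalar smoothing of $f_{\sigma_t}$ (when $\sigma_{t+1}>\sigma_t$), permitting a clean monotone comparison; in the anisotropic setting Lemma~\ref{lem:different_smoothing_values}(\ref{item:double_lemma_a}) shows no such relation exists generically, so the $B(\Sigma_t,\Sigma_{t-1})$ gap is irreducible and must be carried as the genuine cost of rotating the smoothing direction. A secondary technical subtlety is justifying the swap $\nabla\mathbb{E}_k = \mathbb{E}_k\nabla$ when identifying $\mathbb{E}[\nabla f_{k,\Sigma}]$ with $\nabla f_\Sigma$, which again reduces to Fubini--Tonelli together with the regularity provided by $L$-smoothness.
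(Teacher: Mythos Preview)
Your proposal is correct and follows essentially the same route as the paper's proof: apply the $L$-smooth descent inequality to the surrogate $f_{\Sigma_t}$, take expectation over $k_t$ to collapse the inner product into $\eta_t\|\nabla f_{\Sigma_t}(\bmx_{t-1})\|^2$, bound $\mathbb{E}\|\nabla f_{k,\Sigma_t}\|^2$ via Corollary~\ref{cor:nesterov_lemma_4_reversed}, insert the $B(\Sigma_t,\Sigma_{t-1})$ correction from Lemma~\ref{lem:different_smoothing_values} to telescope, and finish with the $\min\leq$ weighted-average argument. The only cosmetic difference is that the paper packages the two pieces of the gradient-norm bound into a single symbol $\lambda_t := 2\lambda^2 + \tfrac{1}{4}L^2\|\Sigma_t\|^4\|\Sigma_t^{-1}\|^2(3+d)^3$ before summing, and sets $\Sigma_0=0$ explicitly so that the boundary term is literally $f(\bmx_0)-f_{\Sigma_T}(\bmx_T)\leq f(\bmx_0)-f^*$ rather than being absorbed as you describe; your handling is equivalent.
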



As $T\to\infty$, if $(\|\Sigma_t\|)_{t\geq 1}$ are bounded and $\sum_{t\geq 1}\eta_t^2<\infty$ but $\sum_{t\geq 1}\eta_t=\infty$, then the right hand side of~\eqref{eqn:thm_gssgd} converges to 0.
The first two terms on the right hand side of~\eqref{eqn:thm_gssgd} are the same as the typical, unsmoothed case.
The additional terms indicate an additional theoretical cost of smoothing.
The first of these terms is an inherent cost of smoothing, which can only be 0 if we do not smooth.
The second term indicates the cost of changing the smoothing parameter.
In particular, if $\Sigma_{t}=\Sigma$ for all $t\geq 1$, then $B(\Sigma_t,\Sigma_{t-1})=0$ for all $t\geq 1$, which entirely removes the last term.

Despite these additional terms, in practice, we see that Gaussian smoothing substantially improves the speed of convergence.
Anecdotally, it seems that Gaussian smoothing helps with SGD's ability to generalize (see numerical experiments in~\cite{gssgd}), which is similar observation to increasing the mini-batch size in SGD~\cite{keskar2016large}.

If we use a constant learning rate ($\eta_t=\eta$ for all $t\geq 1$), then Algorithm~\ref{alg:agssgd} converges to a noisy ball which varies based on the smoothing parameters (i.e., as $t\to\infty$, $\nabla f_{\Sigma_{t+1}}(\bmx_t)$ is in a ball around $\bm{0}$ with radius based on $\lambda$, rather than $\nabla f_{\Sigma_{t+1}}(\bmx_t)\to 0$).
This indicates the need for a variable learning rate, which is the same conclusion one arrives to in the unsmoothed setting.

\section{Anisotropic Gaussian smoothing Adam}

In this section, we adapt the Adam update to use $\nabla f_{k_{t+1},\sigma_{t+1}}(\bmx_t)$ rather than $\nabla f_{k_{t+1}}(\bmx_t)$.
The full details can be found in Algorithm~\ref{alg:agsadam}.
Our last convergence result is for AGS-Adam, which is a generalization of~\cite{he2023convergence} Theorem 11 and its proof is adapted from the proof of the aforementioned result.
The proof along with all of the other modifications can be found in Appendix~\ref{app:agsadam_proofs}.
The generalized result arrives as the same almost sure convergence result as the original proof and unlike the convergence results for \agsgd and AGS-SGD, has no additional terms that indicate any cost of smoothing.
As discussed in~\cite{he2023convergence}, the assumptions we have here are common or even shown to be necessary for stochastic optimization convergence.

\begin{thm}
\label{thm:agsadam}
Consider \agsadam in Algorithm~\ref{alg:agsadam}.
Let $f$ be $L$-smooth and $f^*$ denote the minimum of $f$.
Assume $\mathbb{E}\big[f_k(\bmx)\big]=f(\bmx)$ and $\mathbb{E}\big[\nabla f_k(\bmx)\big]=\nabla f(\bmx)$.
Suppose that $\mathbb{E}\big[\|\nabla f_k\|^2\big]\leq\lambda$ for any $k\in[K]$,
\begin{equation}
    \sum_{t=1}^{\infty}\eta_t=\infty,
    \quad\sum_{t=1}^{\infty}\eta_t^2<\infty,
    \quad\sum_{t=1}^{\infty}\eta_t(1-\theta_t)<\infty,
\end{equation}
and there exists a non-increasing sequence of real numbers $(\alpha_t)_{t\geq 1}$ such that $\eta_t=\Theta(\alpha_t)$.
Let $\widetilde{B}$ satisfy
\begin{equation}
\label{eqn:agsadam_grad_bound}
    \widetilde{B}(\Sigma,\Tau)
    \geq\|\nabla f_{\Sigma}(x)-\nabla f_{\Tau}(x)\|.
\end{equation}
Suppose for some $\widetilde{M}>0$ that
\begin{equation}
    \widetilde{B}(\Sigma_{t+1},\Sigma_t)\leq \widetilde{M}\eta_t,\;
    \sum_{t=1}^{\infty}B(\Sigma_t,\Sigma_{t-1})<\infty,
    \text{ and }
    \|\Sigma_t\|\to 0.
\end{equation}
Then
\begin{equation}
    \lim_{t\to\infty}\|\nabla f(\bmx_t)\|^2=0\text{ a.s.}
    \text{ and }
    \lim_{t\to\infty}\mathbb{E}\big[\|\nabla f(\bmx_t)\|^2\big]=0.
\end{equation}
\end{thm}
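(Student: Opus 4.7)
The plan is to adapt the proof of Theorem~11 of~\cite{he2023convergence} by treating $f_{\Sigma_{t+1}}$, rather than $f$, as the objective driving the analysis at iteration $t+1$. The standing hypotheses of the original theorem all transfer to the smoothed objective: by Lemma~\ref{lem:smoothing_maintains_properties_nesterov}(\ref{item:lemma_a}) each $f_{\Sigma}$ is itself $L$-smooth; differentiating under the integral in~\eqref{eqn:definition_of_fsigma} gives $\nabla f_{k,\Sigma}(\bmx) = \mathbb{E}_{\bmu}[\nabla f_{k}(\bmx+\Sigma\bmu)]$, whence $\mathbb{E}[\nabla f_{k,\Sigma_{t+1}}(\bmx_t)\mid\mathcal{F}_t]=\nabla f_{\Sigma_{t+1}}(\bmx_t)$; and Jensen's inequality combined with the bound on $\mathbb{E}[\|\nabla f_k\|^2]$ yields $\mathbb{E}[\|\nabla f_{k,\Sigma_{t+1}}\|^2]\leq\lambda$. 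Thus the stochastic oracle used by \agsadam satisfies, with respect to the moving objective $f_{\Sigma_{t+1}}$, exactly the assumptions that the He--et--al.\ proof imposes on an ordinary stochastic-gradient oracle.

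First, I would reproduce the He--et--al.\ per-step descent recursion with $f_{\Sigma_{t+1}}$ in place of $f$: $L$-smoothness of $f_{\Sigma_{t+1}}$ supplies the quadratic upper bound on $f_{\Sigma_{t+1}}(\bmx_{t+1})-f_{\Sigma_{t+1}}(\bmx_{t})$, and the Adam-specific manipulations coupling the momentum buffer $\bm{m}_t$ to $\nabla f_{\Sigma_{t+1}}(\bmx_t)$ via the $\eta_t(1-\theta_t)$ terms proceed verbatim, the only change being an extra additive error of $\|\nabla f_{\Sigma_{t+1}}(\bmx_t)-\nabla f_{\Sigma_t}(\bmx_{t-1})\|\le \widetilde{B}(\Sigma_{t+1},\Sigma_t)+L\|\bmx_t-\bmx_{t-1}\| \le \widetilde{M}\eta_t + O(\eta_t)$, which merges into the existing $O(\eta_t(1-\theta_t))$ bookkeeping because of the hypothesis $\widetilde{B}(\Sigma_{t+1},\Sigma_t)\le \widetilde{M}\eta_t$. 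Switching the objective across consecutive iterations costs $f_{\Sigma_{t+1}}(\bmx_t)\le f_{\Sigma_t}(\bmx_t)+B(\Sigma_{t+1},\Sigma_t)$, which is summable by assumption; telescoping, taking expectations, and using Lemma~\ref{lem:difference_f_and_fsigma}(\ref{item:smooth_lemma_a}) together with $\|\Sigma_t\|\to0$ to give the uniform lower bound $f_{\Sigma_t}(\bmx_{t-1})\ge f^*-\tfrac{Ld}{4}\|\Sigma_t\|^2$, the Robbins--Siegmund supermartingale theorem then yields
\begin{equation*}
    \sum_{t\ge1}\eta_t\,\|\nabla f_{\Sigma_{t+1}}(\bmx_t)\|^2<\infty\quad\text{a.s.}
\end{equation*}

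Combined with $\sum\eta_t=\infty$ and the regularity $\eta_t=\Theta(\alpha_t)$ for a non-increasing $(\alpha_t)$, the sample-path tightness argument of He--et--al.\ (which bounds $|\|\nabla f_{\Sigma_{t+1}}(\bmx_{t+1})\|^2-\|\nabla f_{\Sigma_{t+1}}(\bmx_t)\|^2|$ via $L$-smoothness and the bounded-second-moment condition, plus now $\widetilde{B}(\Sigma_{t+1},\Sigma_t)\le \widetilde{M}\eta_t$ to handle the shift of the target between $t$ and $t+1$) upgrades the sum bound to $\|\nabla f_{\Sigma_{t+1}}(\bmx_t)\|^2\to 0$ almost surely. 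Finally, Corollary~\ref{cor:nesterov_lemma_4_reversed} gives
\begin{equation*}
    \|\nabla f(\bmx_t)\|^2
    \le 2\|\nabla f_{\Sigma_{t+1}}(\bmx_t)\|^2
       +\frac{L^2\|\Sigma_{t+1}\|^4\|\Sigma_{t+1}^{-1}\|^2(3+d)^3}{4},
\end{equation*}
so the condition $\|\Sigma_t\|\to 0$ (read as controlling $\|\Sigma_t\|^2\|\Sigma_t^{-1}\|\to 0$, which is automatic when $\Sigma_t$ has controlled conditioning, e.g.\ $\Sigma_t=\sigma_t I_d$ or more generally bounded $\|\Sigma_t^{-1}\|$) kills the residual term and yields $\|\nabla f(\bmx_t)\|^2\to 0$ a.s.\ The convergence in expectation follows from the almost-sure statement together with uniform integrability, which is supplied by the uniform bound $\|\nabla f(\bmx_t)\|^2\le 2\lambda+O(\|\Sigma_{t+1}\|^4\|\Sigma_{t+1}^{-1}\|^2)$ obtained from the same corollary applied at each $t$ with the bounded-second-moment assumption.

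The hardest part is the bookkeeping in the first step: every place in the He--et--al.\ argument where constancy of $\nabla f$ is invoked to relate successive momentum buffers must be replaced by a comparison between $\nabla f_{\Sigma_t}$ and $\nabla f_{\Sigma_{t+1}}$, and the two smoothing hypotheses are calibrated precisely so the resulting penalties (of sizes $\widetilde{M}\eta_t$ and $B(\Sigma_{t+1},\Sigma_t)$) slot into the $O(\eta_t(1-\theta_t))$ and summable-noise pigeonholes already present. Verifying this at each intermediate inequality---rather than any single new idea---is where the work of the proof concentrates.
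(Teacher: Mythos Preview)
Your proposal is essentially the paper's proof: both replace $f$ by $f_{\Sigma_{t+1}}$ in the He--et--al.\ descent analysis, absorb the $\widetilde{B}$ and $B$ penalties into summable error terms, derive the a.s.\ summability of $\eta_t\|\nabla f_{\Sigma_{t+1}}(\bmx_t)\|^2$ (the paper reaches this via expectation bounds plus monotone convergence rather than Robbins--Siegmund, but the effect is the same), apply the He--et--al.\ tightness lemma to upgrade to a.s.\ convergence, and close with dominated convergence. One small slip: the $\widetilde{M}\eta_t$ penalty from $\widetilde{B}(\Sigma_{t+1},\Sigma_t)$ lands in the $\sum\eta_t^2$ pigeon-hole after being multiplied by $\eta_{t+1}$ in the descent recursion, not the $\sum\eta_t(1-\theta_t)$ one---both are summable, so the conclusion is unaffected.
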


The $\widetilde{B}$ term is guaranteed to exist by Lemma~\ref{lem:difference_f_and_fsigma} (\ref{item:smooth_lemma_b}), specifically
\begin{align}
    \|\nabla f_{\Sigma}(x)-\nabla f_{\Tau}(x)\|
    &\leq
        \|\nabla f_{\Sigma}(x)-\nabla f(x)\|
        +\|\nabla f(x)-\nabla f_{\Tau}(x)\|\\
    &\leq L(\tfrac{3+d}{2})^{\frac{3}{2}} (\|\Sigma\|^2\|\Sigma^{-1}\|+\|\Tau\|^2\|\Tau^{-1}\|).
\end{align}
Using this bound, we need $\|\Sigma_t\|^2\|\Sigma_t^{-1}\|\leq\widetilde{M}\eta_t$, which actually promises $\|\Sigma_t\|\to 0$ since $\eta_t\to 0$.
However, this bound can be improved upon (see, e.g.,~\cite{gssgd} Lemma 2.2).
Similarly, as with Theorems~\ref{thm:AGSmoothGD_convex},~\ref{thm:AGSmoothGD_nonconvex}, and~\ref{thm:GSSGD}, $B(\Sigma,\Tau)$ varies depending on $\Sigma$ and $\Tau$ (see Lemma~\ref{lem:different_smoothing_values}).
As such, in order to provide the most general result we do not specify either $B$ or $\widetilde{B}$.

\begin{algorithm}[tb]
    \caption{Anisotropic Gaussian Smoothing Adam}
    \label{alg:agsadam}
    \begin{algorithmic}[1]
        \Require{$\eta_t>0$, $\epsilon\geq 0$, $0\leq\beta_t\leq\beta<1$, $\theta_t\in(0,1)$, $x_0\in\mathbb{R}^d$}
        \Require{$\Sigma_t$ symmetric, positive definite for $t=1,2,...$}
        \Ensure{$m_0=0$, $v_0=0$}
        \For{$t=1,2,...$}
            \State $k_t\sim\text{Unif}([K])$
            \State $\bmm_{t}=\beta_t\bmm_{t-1}+(1-\beta_t)\nabla f_{k_t,\Sigma_t}(\bmx_{t-1})$
            \State $\bmv_t=\theta_t\bmv_{t-1}+(1-\theta_t)(\nabla f_{k_t,\Sigma_t}(\bmx_{t-1}))^2$
            \State $\bmx_t=\bmx_{t-1}-\eta_t\dfrac{\bmm_{t}}{\sqrt{\bmv_t+\epsilon}}$
        \EndFor
    \end{algorithmic}
\end{algorithm}


\section{Numerical Methods and Experiments}
\label{sec:numerical_methods}

We now shift our focus to discussing numerical evidence that shows the behavior of anisotropic Gaussian smoothing.
Explicitly calculating $f_{\Sigma}$ or $\nabla f_{\Sigma}$ is difficult, if not impossible in the majority of cases.
As such, we can use~\eqref{eqn:definition_of_fsigma} or~\eqref{eqn:gradient_equation}, respectively, in order to approximate the value.
First, for any $\bmx\in\mathbb{R}^d$,
\begin{equation}
    \Sigma^{-1}\int_{\mathbb{R}^d}\bmu f(\bmx)e^{-\|\bmu\|^2}\;d\bmu
    =0.
\end{equation}
Second, using a simple change of variables,
\begin{equation}
    \Sigma^{-1}\int_{\mathbb{R}^d}\bmu f(\bmx+\Sigma\bmu)e^{-\|\bmu\|^2}\;d\bmu
    =-\Sigma^{-1}\int_{\mathbb{R}^d}\bmu f(\bmx-\Sigma\bmu)e^{-\|\bmu\|^2}\;d\bmu.
\end{equation}
This means that if $\delta_{\Sigma}(\bmx,\bmu)$ is either of the following
\begin{equation}
    \frac{f(\bmx+\Sigma\bmu)-f(\bmx)}{2}
    \text{ or }
    f(\bmx+\Sigma\bmu)-f(\bmx-\Sigma\bmu),
\end{equation}
then
\begin{equation}
\label{eqn:zero_order_grad_estimate}
    \nabla f_{\Sigma}(\bmx)
    \approx\frac{1}{N}\sum_{n=1}^{N}\delta_{\Sigma}(\bmx,\bmu_n)\Sigma^{-1}\bmu_n
\end{equation}
where $\bmu_n$ is sampled from the $d$-dimensional standard normal distribution.
We can now replace $\nabla f_{\Sigma_t}$ in any of our anisotropic smoothing algorithms with its Monte Carlo estimate in~\eqref{eqn:zero_order_grad_estimate} to obtain a zero-order method.
This is the method we use in our numerical experiments.
This type of Monte Carlo estimate plays a key role in many zero-order optimization methods (see~\cite{liu2020primer} for an overview).
However, our anisotropic approach to updating $\Sigma$ is novel in the context of Gaussian smoothing.

Here we provide the definition of the 5 benchmark functions.
To make the test functions more general, we applied the following linear transformation to $\bm x$, i.e.,
\[
\bm z = \mathbf{R}(\bm x - \bm x_{\rm opt}),
\]
where $\mathbf{R}$ is a rotation matrix making the functions non-separable and $\bm x_{\rm opt}$ is the optimal state. Then we substitute $\bm z$ into the standard definitions of the benchmark functions to formulate our test problems. For notational simplicity, we use $\bm z$ as the input variable in the following definitions and omit the dependence of $\bm z$ on $\bm x$.
Lastly, we update $\Sigma_t$ using the CMA-update as discussed in Section~\ref{sec:related_works}.
Results for the experiments can be found in Figure~\ref{fig:numeric_examples}
\vspace{0.1cm}
\begin{itemize}\itemsep0.3cm

\item $F_1(\bm x)$: {\bf Sphere} function
\begin{equation*}
F_1(\bm x) = {\sum_{i=1}^d x_i^2},
\end{equation*}
where $d$ is the dimension and $\bm{x} \in [-2, 2]^d$ is the input domain. The global minimum is $f(\bm{x}_{\rm opt}) = 0$. This represents {\em convex} landscapes. 
Results shown in Figure~\ref{fig:example_anisotropic_smoothing_1}.

\item $F_2(\bm x)$: {\bf Ellipsoidal} function
\begin{equation*}
F_2(\bm x) = {\sum_{i=1}^d 10^{6\frac{i-1}{d-1}} x_i^2},
\end{equation*}
where $d$ is the dimension and $\bm{x} \in [-2, 2]^d$ is the input domain. The global minimum is $f(\bm{x}_{\rm opt}) = 0$. This represents {\em convex and highly ill-conditioned} landscapes. 
Results shown in Figure~\ref{fig:example_anisotropic_smoothing_2}.

\item $F_3(\bm x)$: {\bf Different Powers} function $F_3(\bm x)$ is defined by
\begin{equation*}
F_3(\bm x) =  \sqrt{\sum_{i=1}^d |x_i|^{2 + 4\frac{i-1}{d-1}}},
\end{equation*}
where $d$ is the dimension and $\bm{x} \in [-5, 5]^d$ is the input domain. The global minimum is $f(\bm{x}_{\rm opt}) = 0$ at $\bm{x}_{\rm opt} = (0,...,0)$. This represents {\em convex} landscapes with {\em small solution volume}.
Results shown in Figure~\ref{fig:example_anisotropic_smoothing_3}.

\item $F_4(\bm x)$: {\bf Powell} function $F_{18}(\bm x)$ is defined by
\begin{align*}
F_{18}(\bm x) = &\sum_{i=1}^{d/4} [(x_{4i-3} + 10x_{4i-2})^2 + 5(x_{4i-1} - x_{4i})^2 
\\
& + (x_{4i-2} - 2x_{4i-1})^4 + 10  (x_{4i-3} - x_{4i})^4],
\end{align*}
where $\bm{x} \in [-4, 5]^d$ is the input domain. The global minimum is $f(\bm{x}_{\rm opt}) = 0$, at $\bm{x}_{\rm opt} = (0,\cdots,0)$. This function represents {\em convex} landscapes, where {\em the area that contains global minimum are very small.}
Results shown in Figure~\ref{fig:example_anisotropic_smoothing_4}.

\item $F_5(\bm x)$: {\bf Rosenbrock} function
\begin{equation*}
F_6(\bm x) =  \sum_{i=1}^{d-1} [100(z_{i+1}-x_i^2)^2 + (x_i-1)^2],
\end{equation*}
where $d$ is the dimension and $\bm{x} \in [-5, 10]^d$ is the initial search domain. The global minimum is $f(\bm{x}_{\rm opt}) = 0$. The function is {\em unimodal}, and the global minimum lies in a {\em bending ridge}, which needs to be followed to reach solution. The ridge changes its orientation $d - 1$ times. 
Results shown in Figure~\ref{fig:example_anisotropic_smoothing_5}.

\end{itemize}

\begin{figure}[ht!]
    \centering
    \begin{subfigure}[t]{0.46\textwidth}
        \centering
        \includegraphics[width=\linewidth]{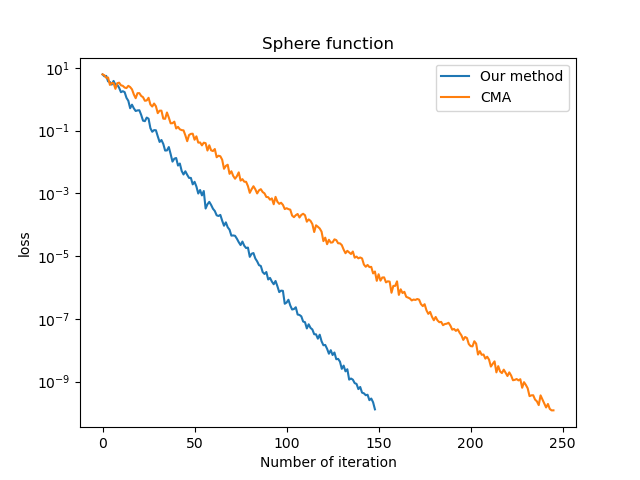}
        \caption{Sphere}
        \label{fig:sphere}
    \end{subfigure}\hfill
    \begin{subfigure}[t]{0.46\textwidth}
        \centering
        \includegraphics[width=\linewidth]{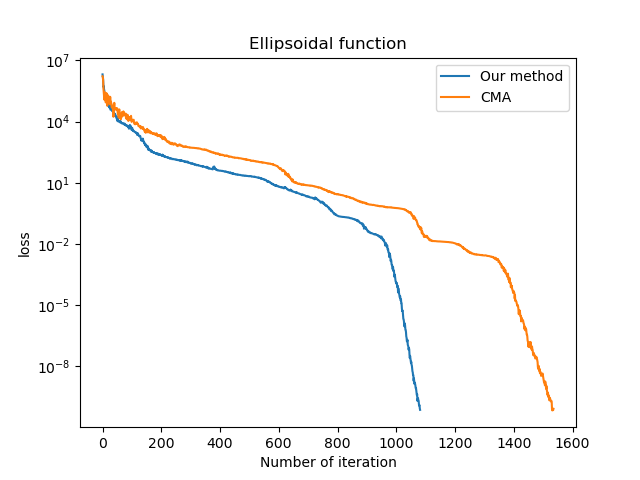}
        \caption{Ellipsoidal}
        \label{fig:ellip}
    \end{subfigure}

    \begin{subfigure}[t]{0.46\textwidth}
        \centering
        \includegraphics[width=\linewidth]{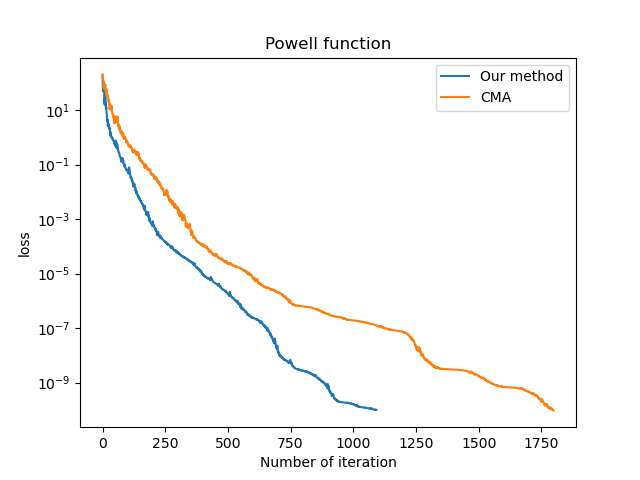}
        \caption{Powell}
        \label{fig:powell}
    \end{subfigure}\hfill
    \begin{subfigure}[t]{0.46\textwidth}
        \centering
        \includegraphics[width=\linewidth]{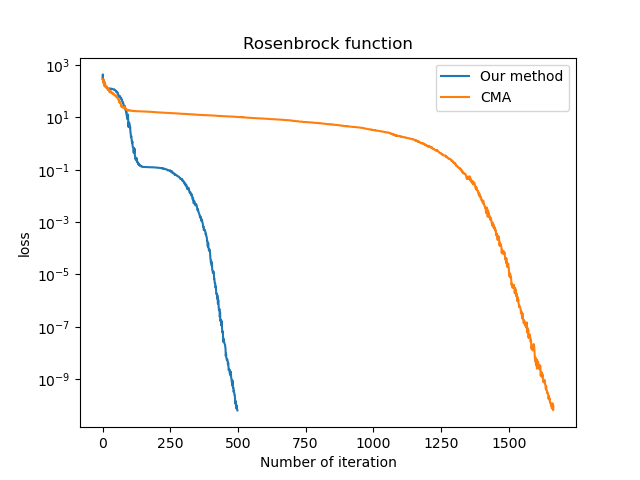}
        \caption{Rosenbrock}
        \label{fig:rosen}
    \end{subfigure}
    
    \begin{subfigure}[t]{0.46\textwidth}
        \centering
        \includegraphics[width=\linewidth]{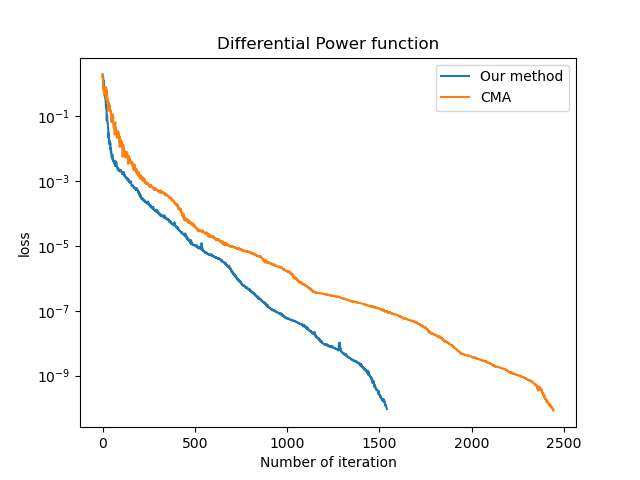}
        \caption{Differential power}
        \label{fig:diff}
    \end{subfigure}
    \caption{Comparison between CMA and AGS-GD for specified functions.}
    \label{fig:numeric_examples}
\end{figure}

\section{Conclusion}

In this effort we introduced a family of anisotropic Gaussian smoothing algorithms, AGS-GD, AGS-SGD, and AGS-Adam, as alternatives to traditional gradient descent optimization methods. These algorithms address a major challenge in optimization: the tendency of methods to become trapped in suboptimal local minima. By replacing the standard gradient with a non-local gradient derived through anisotropic Gaussian smoothing, we provide a more effective path toward convergence. Our convergence analyses build on and extend existing results for standard gradient descent and isotropic Gaussian smoothing, demonstrating their applicability to both convex and non-convex, L-smooth functions. In the stochastic setting, we showed that these algorithms converge to a noisy ball around a minimum, with the size of this ball determined by the smoothing parameters—emphasizing the importance of parameter selection in practice.

While the theoretical benefits of anisotropic Gaussian smoothing are clear, practical implementation remains a challenge due to the computational complexity of calculating smoothed functions or their gradients. To mitigate this, we utilize Monte Carlo estimation to approximate the smoothed gradient, aligning with established zero-order optimization methods and making the AGS algorithms more feasible for real-world applications.

Our work primarily establishes the theoretical foundation and convergence properties of these anisotropic Gaussian smoothing algorithms. 
One particularly useful research direction would be investigating the relationship between smoothing parameter selection and algorithm performance across different problem domains.
We believe that AGS algorithms have significant potential to advance optimization by offering robust and efficient solutions for complex optimization tasks.

\bibliographystyle{plain}
\bibliography{main}

\clearpage
\appendix

\section{Proofs of Background Results}
\label{app:background_results}

\begin{proof}[Proof of \eqref{eqn:change_of_variables}]
Let $\bmv=\bmx+\Sigma\bmu$. Since $\Sigma$ is invertible,
\begin{equation}
\label{eqn:cov_1}
    \bmu=\Sigma^{-1}(\bmv-\bmx).
\end{equation}
Note that
\begin{align}
    \bmu_i
    =\big(\Sigma^{-1}(\bmv-\bmx)\big)_i
    =\sum_{k=1}^{d}\Sigma_{i,k}^{-1}(\bmv_k-\bmx_k).
\end{align}
This means $\frac{\partial \bmu_i}{\partial \bmv_j}=\Sigma_{i,j}^{-1}$ and
\begin{align}
\label{eqn:cov_2}
    \frac{\partial\bmu}{\partial\bmv}
    =\left|\begin{array}{ccc}
        \frac{\partial \bmu_1}{\partial \bmv_1}&\cdots&\frac{\partial \bmu_1}{\partial \bmv_d}\\
        \vdots&\ddots&\vdots\\
        \frac{\partial \bmu_d}{\partial \bmv_1}&\cdots&\frac{\partial \bmu_d}{\partial \bmv_d}
    \end{array}\right|
    =|\Sigma^{-1}|
    =\frac{1}{|\Sigma|}.
\end{align}
Also,
\begin{align}
\label{eqn:cov_3}
    \|\bmu\|^2
    =\|\Sigma^{-1}(\bmv-\bmx)\|^2
    =(\bmv-\bmx)^T(\Sigma^T\Sigma)^{-1}(\bmv-\bmx)
    =(\bmv-\bmx)^T\Sigma^{-2}(\bmv-\bmx)
\end{align}
where the last equality relies on the fact that $\Sigma$ is symmetric.
Combining (\ref{eqn:cov_1}), (\ref{eqn:cov_2}), and (\ref{eqn:cov_3}) give the result.
\end{proof}

\begin{proof}[Proof of \eqref{eqn:gradient_equation}]
We start by computing the partials of
\begin{equation}
    f_{\Sigma}(\bmx)
    =\frac{1}{\pi^{\frac{d}{2}}|\Sigma|}\int_{\mathbb{R}^d}f(\bmv)e^{-(\bmv-\bmx)^T\Sigma^{-2}(\bmv-\bmx)}\;d\bmv.
\end{equation}
First, the derivative of the exponent is
\begin{align}
    \frac{\partial}{\partial \bmx_i}\Big[-(\bmv-\bmx)^T\Sigma^{-2}(\bmv-\bmx)\Big]
    &=-\frac{\partial}{\partial \bmx_i}\|\Sigma^{-1}(\bmv-\bmx)\|^2\\
    &=-\frac{\partial}{\partial \bmx_i}\sum_{j=1}^{d}\big(\Sigma^{-1}(\bmv-\bmx)\big)^2_j\\
    &=-\frac{\partial}{\partial \bmx_i}\sum_{j=1}^{d}\left(\sum_{k=1}^{d}\Sigma^{-1}_{j,k}(\bmv_k-\bmx_k)\right)^2\\
    &=\sum_{j=1}^{d}2\Sigma^{-1}_{j,i}\big(\Sigma^{-1}(\bmv-\bmx)\big)_j.
\end{align}
This means
\begin{align}
    \frac{\partial}{\partial \bmx_i}f_{\Sigma}(\bmx)
    &=\frac{2}{\pi^{\frac{d}{2}}}\int_{\mathbb{R}^d}\big(\Sigma^{-1}\bmu\big)_if(\bmx+\Sigma\bmu)e^{-\|\bmu\|^2}\;d\bmu.
\end{align}
Second, we combine the partials to get the gradient
\begin{align}
    \nabla f_{\Sigma}(\bmx)
    &=\frac{2}{\pi^{\frac{d}{2}}}\int_{\mathbb{R}^d}\big(\Sigma^{-1}\bmu\big)f(\bmx+\Sigma\bmu)e^{-\|\bmu\|^2}\;d\bmu\\
    &=\frac{2}{\pi^{\frac{d}{2}}}\Sigma^{-1}\int_{\mathbb{R}^d}\bmu f(\bmx+\Sigma\bmu)e^{-\|\bmu\|^2}\;d\bmu
\end{align}
\end{proof}

\begin{proof}[Proof of Lemma~\ref{lem:smoothing_maintains_properties_nesterov} (\ref{item:lemma_c})]
The proof that $f_{\Sigma}$ is $M$-Lipschitz is the same as the isotropic case.

To see that $f_{\Sigma}$ is $M\sqrt{2d}\|\Sigma^{-1}\|$-smooth:
\begin{align}
\begin{split}
    \|\nabla f_{\Sigma}(\bmx)-\nabla f_{\Sigma}(\bmy)\|
    &=\frac{2}{\pi^{\frac{d}{2}}}\left\|
        \int_{\mathbb{R}^d}\Big(f(\bmx+\Sigma \bmu)-f(\bmy+\Sigma \bmu)\Big)\Sigma^{-1}\bmu e^{-\|\bmu\|^2}\;d\bmu
    \right\|\\
    &\leq\frac{2}{\pi^{\frac{d}{2}}}
        \int_{\mathbb{R}^d}\Big|f(\bmx+\Sigma \bmu)-f(\bmy+\Sigma \bmu)\Big|\|\Sigma^{-1}\bmu\| e^{-\|\bmu\|^2}\;d\bmu\\
    &\leq\frac{2}{\pi^{\frac{d}{2}}}\int_{\mathbb{R}^d}M\|\bmx-\bmy\|\|\Sigma^{-1}\|\|\bmu\|e^{-\|\bmu\|^2}\;d\bmu\\
    &=\frac{2}{\pi^{\frac{d}{2}}}M\|\Sigma^{-1}\|\|\bmx-\bmy\|\int_{\mathbb{R}^d}\|\bmu\|e^{-\|\bmu\|^2}\;d\bmu\\
    &=M\|\Sigma^{-1}\|\sqrt{2d}\|\bmx-\bmy\|.
\end{split}
\end{align}
\end{proof}

\begin{proof}[Proof of Lemma~\ref{lem:difference_f_and_fsigma} (\ref{item:smooth_lemma_a})]
Observe
\begin{align}
\begin{split}
    |f_{\Sigma}(\bmx)-f(\bmx)|
    &=\frac{1}{\pi^{\frac{d}{2}}}\left\|
        \int_{\mathbb{R}^d}\Big(f(\bmx+\Sigma \bmu)-f(\bmx)-\langle\nabla f(\bmx),\Sigma \bmu\rangle\Big)e^{-\|\bmu\|^2}\;d\bmu
    \right\|\\
    &\leq\frac{1}{\pi^{\frac{d}{2}}}
        \int_{\mathbb{R}^d}\Big|f(\bmx+\Sigma \bmu)-f(\bmx)-\langle\nabla f(\bmx),\Sigma \bmu\rangle\Big|e^{-\|\bmu\|^2}\;d\bmu\\
    &\leq\frac{1}{\pi^{\frac{d}{2}}}
        \int_{\mathbb{R}^d}\frac{1}{2}L\|\Sigma \bmu\|^2e^{-\|\bmu\|^2}\;d\bmu\\
    &\leq\frac{L}{2\pi^{\frac{d}{2}}}\|\Sigma\|^2
        \int_{\mathbb{R}^d}\|\bmu\|^2e^{-\|\bmu\|^2}\;d\bmu\\
    &=\frac{Ld}{4}\|\Sigma\|^2.
\end{split}
\end{align}
\end{proof}

\begin{proof}[Proof of Lemma~\ref{lem:difference_f_and_fsigma} (\ref{item:smooth_lemma_b})]
Observe
\begin{align}
\begin{split}
    &\|\nabla f_{\Sigma}(\bmx)-\nabla f(\bmx)\|\\
    &\qquad\qquad
    =\frac{2}{\pi^{\frac{d}{2}}}\left\|
        \int_{\mathbb{R}^d}\Big(\big(f(\bmx+\Sigma \bmu)-f(\bmx)\big)\Sigma^{-1}\bmu-\langle\nabla f(\bmx),\bmu\rangle I_d\bmu\Big)e^{-\|\bmu\|^2}\;d\bmu
    \right\|\\
    &\qquad\qquad
    =\frac{2}{\pi^{\frac{d}{2}}}\left\|
        \int_{\mathbb{R}^d}\Big(\big(f(\bmx+\Sigma \bmu)-f(\bmx)\big)I_d-\langle\nabla f(\bmx),\bmu\rangle\Sigma\Big)\Sigma^{-1}\bmu e^{-\|\bmu\|^2}\;d\bmu
    \right\|\\
    &\qquad\qquad
    =\frac{2}{\pi^{\frac{d}{2}}}\left\|
        \int_{\mathbb{R}^d}\Big(\big(f(\bmx+\Sigma \bmu)-f(\bmx)\big)-\langle\nabla f(\bmx),\Sigma \bmu\rangle\Big)\Sigma^{-1}\bmu e^{-\|\bmu\|^2}\;d\bmu
    \right\|\\
    &\qquad\qquad
    \leq\frac{2}{\pi^{\frac{d}{2}}}
        \int_{\mathbb{R}^d}\Big|\big(f(\bmx+\Sigma \bmu)-f(\bmx)\big)-\langle\nabla f(\bmx),\Sigma \bmu\rangle\Big|\|\Sigma^{-1}\bmu\|e^{-\|\bmu\|^2}\;d\bmu\\
    &\qquad\qquad
    \leq\frac{2}{\pi^{\frac{d}{2}}}
        \int_{\mathbb{R}^d}\frac{1}{2}L\|\Sigma \bmu\|^2\|\Sigma^{-1}\bmu\|e^{-\|\bmu\|^2}\;d\bmu\\
    &\qquad\qquad
    \leq\frac{1}{\pi^{\frac{d}{2}}}L\|\Sigma\|^2\|\Sigma^{-1}\|
        \int_{\mathbb{R}^d}\|\bmu\|^3e^{-\|\bmu\|^2}\;d\bmu\\
    &\qquad\qquad
    =L\|\Sigma\|^2\|\Sigma^{-1}\|\left(\frac{3+d}{2}\right)^{\frac{3}{2}}.
\end{split}
\end{align}
\end{proof}

\begin{proof}[Proof of Corollary~\ref{cor:nesterov_lemma_4_reversed}]
From Lemma~\ref{lem:difference_f_and_fsigma}~(\ref{item:smooth_lemma_b}),
\begin{equation}
    |\nabla f_{\Sigma}(\bmx)-\nabla f(\bmx)|^2
    \leq L^2\|\Sigma\|^4\|\Sigma^{-1}\|^2\left(\frac{3+d}{2}\right)^{3}.
\end{equation}
So,
\begin{align}
    \|\nabla f_{\Sigma}(\bmx)\|^2
    &=\|\nabla f(\bmx) + \nabla f_{\Sigma}(\bmx) - \nabla f(\bmx)\|^2\\
    &\leq 
        2\|\nabla f(\bmx)\|^2 
        + 2\|\nabla f_{\Sigma}(\bmx) - \nabla f(\bmx)\|^2\\
    &\leq
        2\|\nabla f(\bmx)\|^2
        + 2L^2\|\Sigma\|^4\|\Sigma^{-1}\|^2\left(\frac{3+d}{2}\right)^{3}\\
    &=
        2\|\nabla f(\bmx)\|^2
        +\frac{L^2\|\Sigma\|^4\|\Sigma^{-1}\|^2(3+d)^3}{4}.
\end{align}
\end{proof}

\begin{proof}[Proof of Lemma~\ref{lem:different_smoothing_values} (\ref{item:double_lemma_a})]
Before we prove the result, we justify the existence of $\Eta$. Since $\Sigma$ and $\Tau$ are symmetric, $\Sigma^2$ and $\Tau^2$ are also symmetric and so $\Sigma^2+\Tau^2$ is symmetric too. Further, for any symmetric $A$
\begin{equation}
    \bmx^TA^2\bmx=(A\bmx)^T(A\bmx)=\|A\bmx\|^2\geq 0,
\end{equation}
which means $A^2$ is positive semi-definite.
In our setting, $\Sigma^2$ and $\Tau^2$ are both positive semi-definite.
Then
\begin{equation}
    \bmx^T(\Sigma^2+\Tau^2)\bmx
    =\bmx^T\Sigma^2\bmx+\bmx^T\Tau^2\bmx
    \geq 0 + 0
    = 0
\end{equation}
shows that $\Sigma^2+\Tau^2$ is positive semi-definite as well.
As such, $\Sigma^2+\Tau^2$ has a square root, which we call $\Eta$.

Observe the following:
\begin{align}
\begin{split}
    (f_{\Sigma})_{\Tau}(\bmx)
    &=(f\star k_{\Sigma})_{\Tau}(\bmx)\\
    &=\big((f\star k_{\Sigma})\star k_{\Tau}\big)(\bmx)\\
    &=\big(f\star (k_{\Sigma}\star k_{\Tau})\big)(\bmx).
\end{split}
\end{align}
If $X\sim N(\bm{0},\Sigma^2)$ and $Y\sim N(\bm{0},\Tau^2)$ (which have densities $k_{\Sigma}$ and $k_{\Tau}$, respectively), then $X+Y\sim N(\bm{0},\Sigma^2+\Tau^2)=N(\bm{0},\Eta^2)$ (which has density $k_{\Eta}$).
We also know that the densities of a sum of random variables is generated by convolving the densities of the random variables, which means $k_{\Sigma}\star k_{\Tau}=k_{\Eta}$.
This shows
\begin{align}
\begin{split}
    (f_{\Sigma})_{\Tau}(\bmx)
    &=\big(f\star (k_{\Sigma}\star k_{\Tau})\big)(\bmx)\\
    &=(f\star k_{\Eta})(\bmx)\\
    &=f_{\Eta}(\bmx).
\end{split}
\end{align}
\end{proof}

\begin{proof}[Proof of Lemma~\ref{lem:different_smoothing_values} (\ref{item:double_lemma_b})]
First, suppose $\Sigma^2$ and $\Tau^2$ diagonalize together (i.e., they have the same eigenvectors), then
\begin{equation}
    \Sigma^2=PD_{\Sigma}P^T
    \text{ and }
    T^2=PD_{T}P^T
\end{equation}
where $D_{\Sigma}=\text{diag}(\sigma_1,...,\sigma_d)$ and $D_{T}=\text{diag}(\tau_1,...,\tau_d)$ are the diagonal matrices containing the corresponding eigenvalues (in descending order).
This means
\begin{equation}
    \Sigma^2-T^2
    =P(D_{\Sigma}-D_{T})P^T.
\end{equation}
Let
\begin{equation}
    D=\text{diag}(
        \min(\sigma_1,\tau_1),
        \min(\sigma_2,\tau_2),
        ...,
        \min(\sigma_d,\tau_d)
    ),
\end{equation}
then
\begin{align}
    \Sigma^2
    &=PD_{\Sigma}P^T
    =P(D+(D_{\Sigma}-D_T)_{+})P^T\\
    T^2
    &=PD_{T}P^T
    =P(D+(D_T-D_{\Sigma})_{+})P^T
\end{align}
where $(A)_{+}$ are all of the positive entries of $A$ with 0s everywhere else.
Then
\begin{align}
\begin{split}
    |f_{\Sigma}(\bmx)-f_{T}(\bmx)|
    &=|(f_{D})_{(D_{\Sigma}-D_T)_{+}}(\bmx)-(f_{D})_{(D_T-D_{\Sigma})_{+}}(\bmx)|\\
    &\leq|(f_{D})_{(D_{\Sigma}-D_T)_{+}}(\bmx)-f_D(\bmx)|+|f_D(\bmx)-(f_{D})_{(D_T-D_{\Sigma})_{+}}(\bmx)|\\
    &\leq\frac{Ld}{4}\Big(\|(D_{\Sigma}-D_T)_{+}\|^2+\|(D_T-D_{\Sigma})_{+}\|^2\Big)\\
    &\leq\frac{Ld}{4}\left(\left(\max_{1\leq i\leq d}(0,\sigma_i-\tau_i)\right)^2+\left(\max_{1\leq i\leq d}(0,\tau_i-\sigma_i)\right)^2\right)
\end{split}
\end{align}

Second, suppose that $\Sigma$ and $\Tau$ do not have the same eigenvectors.
Let the eigenvectors of $\Sigma^2$ and $T^2$ are $\bmu_i$ and $\bmv_i$, respectively, with corresponding eigenvalues $\sigma_i$ and $\tau_i$, respectively.
Let $\alpha=\min_{1\leq i\leq d}(\sigma_i,\tau_i)$ and $H=\text{diag}(\alpha,...,\alpha)$.
Then both $\Sigma^2-H^2$ and $T^2-H^2$ are both positive semi-definite.
We will justify this for $\Sigma^2$, but the same proof will hold for $T^2$.
First, $\bmu_i^T\Sigma^2\bmu_i=\sigma_i\|\bmu_i\|^2$.
Second, if $\bmx=\sum c_i\bmu_i$, then
\begin{align}
\begin{split}
    \bmx^T\Sigma^2\bmx
    &=\bmx^T\left(\sum c_i\sigma_i\bmu_i\right)\\
    &=\sum c_i\sigma_i\bmx^T\bmu_i\\
    &=\sum_ic_i\sigma_i\left(\sum_j c_j\bmu_j^T\bmu_i\right)\\
    &=\sum_i\sigma_ic_i^2\|\bmu_i\|^2\\
    &\geq\sum_i\alpha c_i^2\|\bmu_i\|^2\\
    &=\alpha\|\bmx\|^2
\end{split}
\end{align}
So,
\begin{align}
\begin{split}
    \bmx^T(\Sigma^2-H^2)\bmx
    &=\bmx^T\Sigma^2\bmx-\bmx^TH\bmx\\
    &=\bmx^T\Sigma^2\bmx-\alpha\|\bmx\|^2\\
    &\geq\alpha\|\bmx\|^2-\alpha\|\bmx\|^2\\
    &=0,
\end{split}
\end{align}
which shows that $\Sigma^2-H^2$ is indeed positive semi-definite.

One immediate bound is
\begin{equation}
    \|\Sigma^2-H^2\|
    \leq\|\Sigma^2\|-\min_{1\leq i\leq d}(\sigma_i,\tau_i),
\end{equation}
since if $\bmx$ is an eigenvector of $\Sigma^2-H^2$
\begin{align}
                        &\lambda \bmx=(\Sigma^2-H^2)\bmx=\Sigma^2\bmx-\alpha \bmx\\
    \Longrightarrow\quad&(\lambda +\alpha)\bmx=\Sigma^2 \bmx\leq\|\Sigma^2\|\bmx\\
    \Longrightarrow\quad&\lambda\leq\|\Sigma^2\|-\alpha
\end{align}
This makes the overall bound
\begin{equation}
\label{eqn:different_smoothing_values}
    |f_{\Sigma}(\bmx)-f_{T}(\bmx)|
    \leq\frac{Ld}{4}\left(\|\Sigma^2\|+\|T^2\|-2\min_{1\leq i\leq d}(\sigma_i,\tau_i)\right)
\end{equation}

\end{proof}

\begin{proof}[Proof of Lemma~\ref{lem:different_smoothing_values} (\ref{item:double_lemma_c})]
WLOG $\Tau^2-\Sigma^2$ is positive semi-definite.
Since $\Tau^2-\Sigma^2$ is symmetric, $\Tau^2-\Sigma^2$ has a square root.
Let $\Eta = \sqrt{\Tau^2-\Sigma^2}$.
By (\ref{item:double_lemma_a}), $(f_{\Sigma})_{\Eta}=f_{\Tau}$.
Applying the previous lemma, we have
\begin{align}
\begin{split}
    |f_{\Sigma}(\bmx)-f_{\Tau}(\bmx)|
    &=|f_{\Sigma}(\bmx)-(f_{\Sigma})_{\Eta}(\bmx)|\\
    &\leq\frac{Ld}{4}\|H\|^2\\
    &=\frac{Ld}{4}\|\sqrt{\Tau^2-\Sigma^2}\|^2\\
    &=\frac{Ld}{4}\|\Tau^2-\Sigma^2\|.
\end{split}
\end{align}
\end{proof}

\section{Proofs of \agsgd}
\label{app:agsgd_proofs}

\begin{proof}[Proof of Theorem~\ref{thm:AGSmoothGD_convex}]
(This is essentially the proof in~\cite{gsgd} with changes made on the bounds.)

Suppose $f$ is convex and $L$-smooth.
By Lemma \ref{lem:smoothing_maintains_properties_nesterov} (\ref{item:lemma_a}), $f_{\Sigma_t}$ is $L$-smooth for each $t\in\mathbb{N}$.
As such, we have
\begin{align}
\begin{split}
\label{eqn:quickproof2}
    f_{\Sigma_{t+1}}(\bmx_{t+1})
    &\leq f_{\Sigma_{t+1}}(\bmx_{t})+\langle\nabla f_{\Sigma_{t+1}}(\bmx_{t}),\bmx_{t+1}-\bmx_{t}\rangle +\frac{L}{2}\|\bmx_{t+1}-\bmx_{t}\|^2\\
    &=f_{\Sigma_{t+1}}(\bmx_{t})-\lrate\|\nabla f_{\Sigma_{t+1}}(\bmx_{t})\|^2+\frac{L\lrate^2}{2}\|\nabla f_{\Sigma_{t+1}}(\bmx_{t})\|^2\\
    &=f_{\Sigma_{t+1}}(\bmx_{t})+\lrate\left(\frac{L\lrate}{2}-1\right)\|\nabla f_{\Sigma_{t+1}}(\bmx_{t})\|^2\\
    &\leq f_{\Sigma_{t+1}}(\bmx_{t})-\frac{\lrate}{2}\|\nabla f_{\Sigma_{t+1}}(\bmx_{t})\|^2
\end{split}
\end{align}
Using Lemma~\ref{lem:different_smoothing_values} and the assumption that $|f_{\Sigma}(\bmx)-f_{T}(\bmx)|<B(\Sigma,T)$,
\begin{equation}
\label{eqn:quickproof4}
    f_{\Sigma_t}(\bmx_{t})
    \leq f_{\Sigma_t}(\bmx_{t-1})
    \leq f_{\Sigma_{t-1}}(\bmx_{t-1})+B(\Sigma_t,\Sigma_{t+1}).
\end{equation}

By Lemma~\ref{lem:smoothing_maintains_properties_nesterov} (\ref{item:lemma_b}), $f_{\Sigma_t}$ is convex for each $t\in\mathbb{N}$.
So,
\begin{equation}
\label{eqn:quickproof1}
    f_{\Sigma_t}(\bmx)-f_{\Sigma_t}(\xstar)
    \leq\langle\nabla f_{\Sigma_t}(\bmx),\bmx-\xstar\rangle.
\end{equation}
Then
\begin{align}
\begin{split}
\label{eqn:quickproof3}
    0
    &\leq
    f_{\Sigma_{t+1}}(\bmx_{t+1})-f(\xstar)\\
    &=f_{\Sigma_{t+1}}(\bmx_{t+1})-f_{\Sigma_{t+1}}(\xstar)+f_{\Sigma_{t+1}}(\xstar)-f(\xstar)\\
    &\leq
    f_{\Sigma_{t+1}}(\bmx_{t+1})-f_{\Sigma_{t+1}}(\xstar)+\frac{Ld\|\Sigma_{t+1}\|^2}{4}\\
    &\leq
    f_{\Sigma_{t+1}}(\bmx_{t})-\frac{\lrate}{2}\|\nabla f_{\Sigma_{t+1}}(\bmx_{t})\|^2-f_{\Sigma_{t+1}}(\xstar)+\frac{Ld\|\Sigma_{t+1}\|^2}{4}\\
    &\leq
    \langle\nabla f_{\Sigma_{t+1}}(\bmx_{t}),\bmx_{t}-\xstar\rangle-\frac{\lrate}{2}\|\nabla f_{\Sigma_{t+1}}(\bmx_{t})\|^2+\frac{Ld\|\Sigma_{t+1}\|^2}{4}.
\end{split}
\end{align}
Now, we have the following:
\begin{align}
\begin{split}
\label{eqn:quickproof4again}
    f_{\Sigma_{t+1}}(\bmx_{t+1})-f(\xstar)
    &\leq\langle\nabla f_{\Sigma_{t+1}}(\bmx_{t}),\bmx_{t}-\xstar\rangle-\frac{\lrate}{2}\|\nabla f_{\Sigma_{t+1}}(\bmx_{t})\|^2+\frac{Ld\|\Sigma_{t+1}\|^2}{4}\\
    &=\frac{1}{2\lrate}\left(2\lrate\langle\nabla f_{\Sigma_{t+1}}(\bmx_{t}),\bmx_{t}-\xstar\rangle-\lrate^2\|\nabla f_{\Sigma_{t+1}}(\bmx_{t})\|^2\right)+\frac{Ld\|\Sigma_{t+1}\|^2}{4}\\
    &=\frac{1}{2\lrate}\left(-\|\bmx_{t}-\lrate\nabla f_{\Sigma_{t+1}}(\bmx_{t})-\xstar\|^2+\|\bmx_{t}-\xstar\|^2\right)+\frac{Ld\|\Sigma_{t+1}\|^2}{4}\\
    &=\frac{1}{2\lrate}\left(\|\bmx_{t}-\xstar\|^2-\|\bmx_{t+1}-\xstar\|^2\right)+\frac{Ld\|\Sigma_{t+1}\|^2}{4}
\end{split}
\end{align}
Summing over the steps:
\begin{align}
\begin{split}
\label{eqn:quickproof5}
    \sum_{i=0}^{T-1}\left(f_{\Sigma_{t+1}}(\bmx_{t+1})-f(\xstar)\right)
    &\leq\frac{1}{2\lrate}\sum_{i=0}^{T-1}\left(\|\bmx_t-\xstar\|^2-\|\bmx_{t+1}-\xstar\|^2\right) + \frac{Ld}{4}\sum_{t=0}^{T-1}\|\Sigma_{t+1}\|^2\\
    &=\frac{1}{2\lrate}\|\bmx_0-\xstar\|^2 + \frac{Ld}{4}\sum_{t=0}^{T-1}\|\Sigma_{t+1}\|^2.
\end{split}
\end{align}
By (\ref{eqn:quickproof4}), for $T>t\geq 1$ we have
\begin{equation}
    f_{\Sigma_T}(\bmx_{T})
    \leq f_{\Sigma_{T-1}}(\bmx_{T-1}) + B(\Sigma_{T-1},\Sigma_{T})
    \leq\cdots
    \leq f_{\sigma_{T-t}}(\bmx_{T-t}) + \sum_{j=1}^{t}B(\Sigma_{T-j},\Sigma_{T-j+1}).
\end{equation}
This means we have
\begin{align}
\begin{split}
\label{eqn:quickproof6}
    &T\left(f_{\Sigma_{T}}(\bmx_{T})-f(\xstar)\right)\\
    &\qquad\qquad
        =(f_{\Sigma_{T}}(\bmx_{T})-f(\xstar))+\sum_{t=1}^{T-1}(f_{\Sigma_{T}}(\bmx_{T})-f(\xstar))\\
    &\qquad\qquad
        \leq(f_{\Sigma_{T}}(\bmx_{T})-f(\xstar))
    +\sum_{t=1}^{T-1}\left(f_{\Sigma_{T-t}}(x_{T-t})-f(\xstar)+\sum_{j=1}^{t}B(\Sigma_{T-j},\Sigma_{T-j+1})\right)\\
    &\qquad\qquad
        =\sum_{t=1}^{T}(f_{\Sigma_{t}}(x_t)-f(\xstar))+\sum_{t=1}^{T-1}\sum_{j=1}^{t}B(\Sigma_{T-j},\Sigma_{T-j+1})\\
    &\qquad\qquad
        =\sum_{t=1}^{T}(f_{\Sigma_{t}}(\bmx_t)-f(\xstar))+\sum_{t=1}^{T-1}tB(\Sigma_{T-t},\Sigma_{T-t+1})\\
    &\qquad\qquad
        \leq\frac{1}{2\lrate}\|\bmx_0-\xstar\|^2 + \frac{Ld}{4}\sum_{t=0}^{T-1}\|\Sigma_{T+1}\|^2+\sum_{t=1}^{T-1}tB(\Sigma_{T-t},\Sigma_{T-t+1}).
\end{split}
\end{align}
Finally, applying Lemma~\ref{lem:smoothing_maintains_properties_nesterov} (\ref{item:lemma_d}) again and dividing by $t$,
\begin{align}
\label{eqn:quickproof7}
    f(\bmx_{T})-f(\xstar)
    &\leq f_{\Sigma_{T}}(\bmx_{T})-f(\xstar)\\
    &\leq\frac{1}{2T}\|\bmx_0-\xstar\|^2+\frac{1}{T}\left(
        \frac{Ld}{4}\sum_{t=0}^{T-1}\|\Sigma_{t+1}\|^2+\sum_{t=1}^{T-1}tB(\Sigma_{T-t},\Sigma_{T-t+1})
    \right),
\end{align}
which completes the proof.
\end{proof}


\begin{proof}[Proof of Theorem~\ref{thm:AGSmoothGD_nonconvex}]
(This is essentially the proof in~\cite{gsgd} with changes made on the bounds.)

From the proof of Theorem~\ref{thm:AGSmoothGD_convex}, since $f$ is $L$-smooth (regardless of convexity)
\begin{equation}
    f_{\Sigma_{t+1}}(\bmx_{t+1})
    \leq f_{\Sigma_{t+1}}(\bmx_{t})-\frac{t}{2}\|\nabla f_{\Sigma_{t+1}}(\bmx_{t})\|^2.
\end{equation}
As expected, this means $f_{\Sigma_{t+1}}(\bmx_{t})\geq f_{\Sigma_{t+1}}(\bmx_{t+1})$.

For $t\geq 1$,
\begin{align}
\begin{split}
\label{eqn:ncgdwss1}
    \|\nabla f_{\Sigma_{t+1}}(\bmx_{t})\|^2
    &\leq\frac{2}{t}\Big(f_{\Sigma_{t+1}}(\bmx_{t})-f_{\Sigma_{t+1}}(\bmx_{t+1})\Big)\\
    &\leq\frac{2}{t}\Big(f_{\Sigma_{t}}(\bmx_{t})-f_{\Sigma_{t+1}}(\bmx_{t+1}) + B(\Sigma_{t+1},\Sigma_{t})\Big)\\
    &=\frac{2}{t}\Big(f_{\Sigma_{t}}(\bmx_{t})-f_{\Sigma_{t+1}}(\bmx_{t+1})\Big) + \frac{2}{t}B(\Sigma_{t+1},\Sigma_{t}).
\end{split}
\end{align}

This means that
\begin{align}
\begin{split}
    &T\min_{t=1,...,T}\|\nabla f(\bmx_t)\|^2\\
    &\leq\sum_{t=1}^{T}\|\nabla f(\bmx_t)\|^2\\
    &\leq 2\sum_{t=1}^{T}\left(\|\nabla f_{\Sigma_t}(\bmx_t)\|^2+L^2\|\Sigma_t\|^4\|\Sigma_t^{-1}\|^2\left(\frac{6+d}{2}\right)^3\right)\\
    &\leq 
        \frac{4}{T}\sum_{t=1}^{T}\big(f_{\Sigma_{t}}(\bmx_{t})-f_{\Sigma_{t+1}}(\bmx_{t+1})\big)
        +\frac{4}{T}\sum_{t=1}^{T}B(\Sigma_{t+1},\Sigma_{t})
        +L^2\left(\frac{6+d}{2}\right)^3\sum_{t=1}^{T}\|\Sigma_t\|^4\|\Sigma_t^{-1}\|^2\\
    &=
        \frac{4}{T}\big(f_{\Sigma_{1}}(\bmx_{1})-f_{\Sigma_{T+1}}(\bmx_{T+1})\big)
        +\frac{4}{T}\sum_{t=1}^{T}B(\Sigma_{t+1},\Sigma_{t})
        +L^2\left(\frac{6+d}{2}\right)^3\sum_{t=1}^{T}\|\Sigma_t\|^4\|\Sigma_t^{-1}\|^2.
\end{split}
\end{align}
Dividing both sides by $T$ gives the result.
\end{proof}

\section{Proofs of \agssgd}
\label{app:agssgd_proofs}

\begin{proof}[Proof of Theorem~\ref{thm:GSSGD}]
Note that this is a modification of the proof of convergence for GSmoothSGD from~\cite{gssgd}.
First, since $\mathbb{E}\big[\|\nabla f_{k}\|^2\big]\leq\lambda$, from Corollary~\ref{cor:nesterov_lemma_4_reversed}, we have that
\begin{equation}
    \mathbb{E}\big[\|\nabla f_{k,\Sigma}\|^2\big]
    \leq 2\lambda^2+\frac{1}{4}L^2\|\Sigma\|^4\|\Sigma^{-1}\|^2(3+d)^3.
\end{equation}
For $\Sigma_t$, denote this bound as $\lambda_t$, that is
\begin{equation}
    \mathbb{E}\big[\|\nabla f_{k,\Sigma_t}\|^2\big]\leq\lambda_t.
\end{equation}
Second, since $f$ is $L$-smooth, so is $f_{\Sigma_t}$ for all $t$.

Now, we repeat the analysis done in GSmoothSGD.
Since $f_{\Sigma_{t+1}}$ is $L$-smooth,
\begin{equation}
    f_{\Sigma_{t+1}}(\bmx_{t+1})
    =
        f_{\Sigma_{t+1}}(\bmx_t)
        -\eta_t\langle\nabla f_{\Sigma_{t+1}}(\bmx_t),\nabla f_{k_t,\Sigma_{t+1}}(\bmx_t)\rangle
        +\frac{L\eta_t^2}{2}\|\nabla f_{k_t,\Sigma_{t+1}}(\bmx_t)\|^2
\end{equation}
Applying the expectation to this bound, we have
\begin{align}
\begin{split}
    \mathbb{E}\big[f_{\Sigma_{t+1}}(\bmx_{t+1})\big]
    &\leq
        \mathbb{E}\big[f_{\Sigma_{t+1}}(\bmx_{t})\big]
        -\eta_t \mathbb{E}\big[\|\nabla f_{\Sigma_{t+1}}(\bmx_t)\|^2\big]
        +\frac{L\eta_t^2}{2}\lambda_{t+1}.
\end{split}
\end{align}
Rearranging we have
\begin{align}
\begin{split}
    \eta_t \mathbb{E}\big[\|\nabla f_{\Sigma_{t+1}}(\bmx_t)\|^2\big]
    &\leq
        \mathbb{E}\big[f_{\Sigma_{t+1}}(\bmx_{t})-f_{\Sigma_{t+1}}(\bmx_{t+1})\big]
        +\frac{L\eta_t^2}{2}\lambda_{t+1}.
\end{split}
\end{align}
As in the isotropic case, we need to switch from $\Sigma_{t+1}$ to $\Sigma_t$ in order to have a telescoping sum.
Here, in the anisotropic case, our bound is more complicated because it can improve based on the relationship between $\Sigma_{t+1}$ and $\Sigma_t$.
We denote by $B(\Sigma_{t+1},\Sigma_t)$ the bound that satisfies
\begin{equation}
    |f_{\Sigma_{t+1}}(\bmx)-f_{\Sigma_t}(\bmx)|
    \leq B(\Sigma_{t+1},\Sigma_t).
\end{equation}
So, we have
\begin{equation}
    \eta_t \mathbb{E}\big[\|\nabla f_{\Sigma_{t+1}}(\bmx_t)\|^2\big]
    \leq
        \mathbb{E}\big[f_{\Sigma_{t}}(\bmx_{t})-f_{\Sigma_{t+1}}(\bmx_{t+1})\big]
        +\frac{L\eta_t^2}{2}\lambda_{t+1}
        +B(\Sigma_{t+1},\Sigma_t).
\end{equation}
Continuing the analysis from GSmoothSGD, after summing over the steps
\begin{align}
\begin{split}
    \sum_{t=1}^{T}\eta_t\mathbb{E}\big[\|\nabla f_{\Sigma_{t}}(\bmx_{t-1})\|^2\big]
    &\leq
        \sum_{t=1}^{T}\mathbb{E}\big[f_{\Sigma_{t-1}}(\bmx_{t-1})-f_{\Sigma_{t}}(\bmx_{t})\big]
        +\frac{L}{2}\sum_{t=1}^{T}\eta_t^2\lambda_{t}
        +\sum_{t=1}^{T}B(\Sigma_{t},\Sigma_{t-1})\\
    &=
        E(f_{\Sigma_{0}}(\bmx_0)-f_{\Sigma_{T}}(\bmx_{T}))
        +\frac{L}{2}\sum_{t=1}^{T}\eta_t^2\lambda_{t}
        +\sum_{t=1}^{T}B(\Sigma_{t},\Sigma_{t-1})\\
    &\leq
        f(\bmx_0)-f_*
        +\frac{L}{2}\sum_{t=1}^{T}\eta_t^2\lambda_{t}
        +\sum_{t=1}^{T}B(\Sigma_{t},\Sigma_{t-1})
\end{split}
\end{align}
where we use $\Sigma_0=0$ for notational convenience.
Rearranging, we have
\begin{align}
\begin{split}
    \sum_{t=1}^{T}\mathbb{E}\big[\|\nabla f_{\Sigma_{t}}(\bmx_{t-1})\|^2\big]
    &\leq
        \frac{f(\bmx_0)-f_*}{\sum_{t=1}^{T}\eta_t}
        +\frac{L}{2}\frac{\sum_{t=1}^{T}\eta_t^2\lambda_{t}}{\sum_{t=1}^{T}\eta_t}
        +\frac{\sum_{t=1}^{T}B(\Sigma_{t},\Sigma_{t-1})}{\sum_{t=1}^{T}\eta_t}
\end{split}
\end{align}

After replacing $\lambda_t$, the minimum satisfies
\begin{multline}
    \min_{1\leq t\leq T}\mathbb{E}\big[\|\nabla f_{\Sigma_{t+1}}(\bmx_{t-1})\|^2\big]
    \leq
        \frac{f(\bmx_0)-f_*}{\sum_{t=1}^{T}\eta_t}
        +\frac{L\lambda^2\sum_{t=1}^{T}\eta_t^2}{\sum_{t=1}^{T}\eta_t}\\
        +\frac{L^3(3+d)^3}{8}\frac{\sum_{t=1}^{T}\|\Sigma_t\|^4\|\Sigma_t^{-1}\|^2\eta_t^2}{\sum_{t=1}^{T}\eta_t}
        +\frac{\sum_{t=1}^{T}B(\Sigma_{t},\Sigma_{t-1})}{\sum_{t=1}^{T}\eta_t}.
\end{multline}
\end{proof}

\section{Proofs of \agsadam}
\label{app:agsadam_proofs}

The following proof is the same as the proof in~\cite{gssgd}, where the only change is replacing the bounds from
\begin{equation}
\label{eqn:agsadam_1d_grad_bound}
    \|\nabla f_{\sigma_{t+1}}(\bmx_t)-\nabla f_{\sigma_{t}}(\bmx_t)\|
    \leq L\left(\frac{3+d}{2}\right)^{\nicefrac{3}{2}}\sqrt{|\sigma_{t+1}^2-\sigma_{t}^2|}
\end{equation}
with the analogous bound given by \eqref{eqn:agsadam_grad_bound}.

\begin{lem}[Lemma C.1~\cite{gsgd}]
\label{lem:adam_lemma19}
Let $(\bmx_t)_{t\geq 1}$, $(\bmm_t)_{t\geq 1}$, and $(\bmv_t)_{t\geq 1}$ be the sequences generated by \agsadam.
Let $f$ be $L$-smooth and $f^*$ denote the minimum of $f$.
Assume $\mathbb{E}\big[f_k(\bmx)\big]=f(\bmx)$ and $\mathbb{E}\big[\nabla f_k(\bmx)\big]=\nabla f(\bmx)$.
Suppose that $\mathbb{E}\big[\|\nabla f_k\|^2\big]\leq\lambda$ for any $k\in[K]$.
Then for all $t\geq 1$, we have
\begin{equation}
    \mathbb{E}\left[\left\langle
        \nabla f_{\Sigma_{t+1}}(\bmx_t),
        \frac{\bmm_{t+1}}{\sqrt{\bmv_{t+1}+\epsilon}}
    \right\rangle\right]
    \geq\sum_{i=1}^{t+1}\prod_{j=i+1}^{t+1}\beta_jD_i
    -\frac{M}{\sqrt{\epsilon}}\sum_{i=1}^{t+1}\prod_{j=i}^{t+1}\beta_j\widetilde{B}(\Sigma_{i},\Sigma_{i-1}),
\end{equation}
where
\begin{equation}
    D_i
    =-L\beta_{i+1}\eta_{i}E\left(\left\|\frac{\bmm_{i}}{\sqrt{\bmv_{i}+\epsilon}}\right\|^2\right)
    +\frac{1-\beta_{i+1}}{\sqrt{M^2+\epsilon}}\mathbb{E}\big[\|\nabla f_{\Sigma_{i+1}}(\bmx_i)\|^2\big]
    -\frac{\sqrt{d}M^4}{\epsilon^{\nicefrac{3}{2}}}(1-\theta_{i})
\end{equation}
and $\mathbb{E}\big[\|\nabla f_k\|^2\big]\leq M$ which exists by Lemma~\ref{lem:difference_f_and_fsigma}.
\end{lem}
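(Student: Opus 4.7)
The plan is to mimic the inductive proof of Lemma C.1 in \cite{gsgd} essentially line for line, isolating the single place where the explicit isotropic gradient-difference bound \eqref{eqn:agsadam_1d_grad_bound} is invoked and replacing it with the generic anisotropic bound $\widetilde{B}(\Sigma,\Tau)$ from \eqref{eqn:agsadam_grad_bound}. Because the conclusion is stated abstractly in terms of $\widetilde{B}$, everything else — the $L$-smoothness of $f_{\Sigma_{t+1}}$ via Lemma~\ref{lem:smoothing_maintains_properties_nesterov}(\ref{item:lemma_a}), the unbiasedness $\mathbb{E}[\nabla f_{k_t,\Sigma_t}(\bmx_{t-1})\mid\mathcal{F}_{t-1}]=\nabla f_{\Sigma_t}(\bmx_{t-1})$, the uniform moment bound $\mathbb{E}\|\nabla f_{k,\Sigma}\|^2\leq M$, and the denominator estimates on $\sqrt{\bmv_t+\epsilon}$ — carries over unchanged.

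The argument proceeds by induction on $t$. Using the Adam momentum update $\bmm_{t+1}=\beta_{t+1}\bmm_t+(1-\beta_{t+1})\nabla f_{k_{t+1},\Sigma_{t+1}}(\bmx_t)$, I would decompose
\[
\left\langle \nabla f_{\Sigma_{t+1}}(\bmx_t), \frac{\bmm_{t+1}}{\sqrt{\bmv_{t+1}+\epsilon}}\right\rangle
= \beta_{t+1}\!\left\langle \nabla f_{\Sigma_{t+1}}(\bmx_t), \frac{\bmm_t}{\sqrt{\bmv_{t+1}+\epsilon}}\right\rangle
+(1-\beta_{t+1})\!\left\langle \nabla f_{\Sigma_{t+1}}(\bmx_t), \frac{\nabla f_{k_{t+1},\Sigma_{t+1}}(\bmx_t)}{\sqrt{\bmv_{t+1}+\epsilon}}\right\rangle.
\]
Taking conditional expectations, the second term produces the positive contribution $\frac{1-\beta_{t+1}}{\sqrt{M^2+\epsilon}}\mathbb{E}\|\nabla f_{\Sigma_{t+1}}(\bmx_t)\|^2$ in $D_{t+1}$ by an upper bound on the denominator. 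For the first term, I write $\nabla f_{\Sigma_{t+1}}(\bmx_t)=\nabla f_{\Sigma_t}(\bmx_{t-1})+\bigl(\nabla f_{\Sigma_{t+1}}(\bmx_t)-\nabla f_{\Sigma_t}(\bmx_t)\bigr)+\bigl(\nabla f_{\Sigma_t}(\bmx_t)-\nabla f_{\Sigma_t}(\bmx_{t-1})\bigr)$. The first bracketed difference is bounded in norm by $\widetilde{B}(\Sigma_{t+1},\Sigma_t)$ using \eqref{eqn:agsadam_grad_bound}; paired via Cauchy--Schwarz with $\|\bmm_t\|/\sqrt{\bmv_{t+1}+\epsilon}\leq M/\sqrt{\epsilon}$ this contributes the $-\tfrac{M}{\sqrt{\epsilon}}\beta_{t+1}\widetilde{B}(\Sigma_{t+1},\Sigma_t)$ correction. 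The second bracketed difference is controlled by $L$-smoothness of $f_{\Sigma_t}$ combined with $\bmx_t-\bmx_{t-1}=-\eta_t\bmm_t/\sqrt{\bmv_t+\epsilon}$, yielding the $-L\beta_{t+1}\eta_t\mathbb{E}\|\bmm_t/\sqrt{\bmv_t+\epsilon}\|^2$ contribution.

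The last piece of bookkeeping is to replace $\sqrt{\bmv_{t+1}+\epsilon}$ in the first term by $\sqrt{\bmv_t+\epsilon}$ so that the leading summand matches the inductive hypothesis at step $t$. Since $\bmv_{t+1}=\theta_{t+1}\bmv_t+(1-\theta_{t+1})(\nabla f_{k_{t+1},\Sigma_{t+1}})^2$, a Taylor-type expansion of $1/\sqrt{\cdot}$ together with the bound $\mathbb{E}\|\nabla f_{k,\Sigma}\|^2\leq M^2$ gives an error of order $\tfrac{\sqrt{d}M^4}{\epsilon^{3/2}}(1-\theta_{t+1})$, which is exactly the remaining term in $D_{t+1}$. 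Combining these estimates yields the one-step recursion
\[
\mathbb{E}\!\left\langle\nabla f_{\Sigma_{t+1}}(\bmx_t),\tfrac{\bmm_{t+1}}{\sqrt{\bmv_{t+1}+\epsilon}}\right\rangle
\geq D_{t+1}+\beta_{t+1}\mathbb{E}\!\left\langle\nabla f_{\Sigma_t}(\bmx_{t-1}),\tfrac{\bmm_t}{\sqrt{\bmv_t+\epsilon}}\right\rangle -\tfrac{M}{\sqrt{\epsilon}}\beta_{t+1}\widetilde{B}(\Sigma_{t+1},\Sigma_t),
\]
and unrolling over $t$ produces the double products $\prod_{j=i+1}^{t+1}\beta_j$ and $\prod_{j=i}^{t+1}\beta_j$ in the statement.

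The main obstacle will be the denominator bookkeeping in the switch from $\bmv_{t+1}$ to $\bmv_t$: the isotropic proof uses the specific form of \eqref{eqn:agsadam_1d_grad_bound} both in the gradient-difference step and, implicitly, when comparing moments $\mathbb{E}\|\nabla f_{k,\sigma}\|^2$, and I need to verify that using the abstract bound $\widetilde{B}$ does not disturb the $(1-\theta_t)$ expansion. Provided the uniform moment bound $M$ (guaranteed by Lemma~\ref{lem:difference_f_and_fsigma}) is applied consistently, the $\widetilde{B}$ terms stay cleanly localized to the one place they enter, and the rest of the argument is unaffected.
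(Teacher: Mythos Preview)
Your outline matches the paper's proof almost step for step: the same gradient-difference decomposition $\nabla f_{\Sigma_{t+1}}(\bmx_t)-\nabla f_{\Sigma_t}(\bmx_{t-1})$ split into a smoothing-change piece (bounded by $\widetilde{B}$) and a position-change piece (bounded via $L$-smoothness of $f_{\Sigma_t}$), the same Cauchy--Schwarz pairing with $\|\bmm_t/\sqrt{\bmv_t+\epsilon}\|\leq M/\sqrt{\epsilon}$, and the same recursive unrolling. The paper invokes Lemma~18 of \cite{he2023convergence} for the $\tfrac{\sqrt{d}M^4}{\epsilon^{3/2}}(1-\theta)$ cost of the $\bmv_{t+1}\to\bmv_t$ switch, which is exactly your ``Taylor-type expansion.''

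One ordering point is worth flagging. You expand $\bmm_{t+1}$ first, with $\sqrt{\bmv_{t+1}+\epsilon}$ still in the denominator, and then assert that conditional expectation of the $(1-\beta_{t+1})$ term yields $\tfrac{1-\beta_{t+1}}{\sqrt{M^2+\epsilon}}\mathbb{E}\|\nabla f_{\Sigma_{t+1}}(\bmx_t)\|^2$ by a denominator bound. But $\bmv_{t+1}$ depends on $k_{t+1}$, so the conditional expectation does not factor through the ratio, and you cannot simply bound the denominator because the coordinate products $(\nabla f_{\Sigma_{t+1}}(\bmx_t))_i(\nabla f_{k_{t+1},\Sigma_{t+1}}(\bmx_t))_i$ are not sign-definite. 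The paper handles this by performing the $\bmv_{t+1}\to\bmv_t$ switch \emph{first}, on the full inner product $\Theta_{t+1}=I+II$; then inside $I$ the denominator $\sqrt{\bmv_t+\epsilon}$ is $\mathcal{F}_t$-measurable, conditional expectation turns the stochastic gradient into $\nabla f_{\Sigma_{t+1}}(\bmx_t)$, and the resulting term $\|(\nabla f_{\Sigma_{t+1}}(\bmx_t))^2/\sqrt{\bmv_t+\epsilon}\|_1$ is nonnegative, so the denominator bound $\bmv_t\leq M^2$ is legitimate. This is purely a reordering of the ingredients you already listed; you identified the $\bmv$-bookkeeping as the sticking point, and this is precisely where it bites.
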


\begin{proof}
Let
\begin{align}
\label{eqn:lemma_19_0}
    \Theta_{t+1}
    &=\mathbb{E}\left[\left\langle
        \nabla f_{\Sigma_{t+1}}(\bmx_t),
        \frac{\bmm_{t+1}}{\sqrt{\bmv_{t+1}+\epsilon}}
    \right\rangle\right]\\
    &=\underbrace{
        \mathbb{E}\left[\left\langle
            \nabla f_{\Sigma_{t+1}}(\bmx_t),
            \frac{\bmm_{t+1}}{\sqrt{\bmv_{t}+\epsilon}}
        \right\rangle\right]
    }_{I}
    +\underbrace{
        \mathbb{E}\left[\left\langle
            \nabla f_{\Sigma_{t+1}}(\bmx_t),
            \frac{\bmm_{t+1}}{\sqrt{\bmv_{t+1}+\epsilon}}-\frac{\bmm_{t+1}}{\sqrt{\bmv_{t}+\epsilon}}
        \right\rangle\right]
    }_{II}.
\end{align}
Focusing on $I$, we have
\begin{align}
\label{eqn:lemma_19_1}
    &\mathbb{E}\left[\left.
        \left\langle
            \nabla f_{\Sigma_{t+1}}(\bmx_t),
            \frac{\bmm_{t+1}}{\sqrt{\bmv_{t+1}+\epsilon}}
        \right\rangle
    \right|\mathcal{F}_t\right]\\
    &=\mathbb{E}\left[\left.
        \left\langle
            \nabla f_{\Sigma_{t+1}}(\bmx_t),
            \frac{\beta_{t+1}\bmm_{t}+(1-\beta_{t+1})\nabla f_{k_t,\Sigma_{t+1}}(\bmx_t)}{\sqrt{\bmv_{t+1}+\epsilon}}
        \right\rangle
    \right|\mathcal{F}_t\right]\\
    &=\beta_{t+1}\left\langle
        \nabla f_{\Sigma_{t+1}}(\bmx_t),
        \frac{\bmm_{t}}{\sqrt{\bmv_{t+1}+\epsilon}}
    \right\rangle
    +(1-\beta_{t+1})\left\langle
        \nabla f_{\Sigma_{t+1}}(\bmx_t),
        \frac{\nabla f_{\Sigma_{t+1}}(\bmx_t)}{\sqrt{\bmv_{t+1}+\epsilon}}
    \right\rangle\\
    &=\beta_{t+1}\left\langle
        \nabla f_{\Sigma_{t+1}}(\bmx_t),
        \frac{\bmm_{t}}{\sqrt{\bmv_{t+1}+\epsilon}}
    \right\rangle
    +(1-\beta_{t+1})\left\|
        \frac{(\nabla f_{\Sigma_{t+1}}(\bmx_t))^2}{\sqrt{\bmv_{t+1}+\epsilon}}
    \right\|_1\\
    &=\beta_{t+1}\left\langle
        \nabla f_{\Sigma_{t}}(\bmx_{t-1}),
        \frac{\bmm_{t}}{\sqrt{\bmv_{t+1}+\epsilon}}
    \right\rangle
    +(1-\beta_{t+1})\left\|
        \frac{(\nabla f_{\Sigma_{t+1}}(\bmx_t))^2}{\sqrt{\bmv_{t+1}+\epsilon}}
    \right\|_1\\
    &\qquad\qquad\qquad-\beta_{t+1}\left\langle
        \nabla f_{\Sigma_{t+1}}(\bmx_{t})-\nabla f_{\Sigma_{t}}(\bmx_{t-1}),
        \frac{\bmm_{t}}{\sqrt{\bmv_{t+1}+\epsilon}}
    \right\rangle,
\end{align}
where $(\nabla f_{\Sigma_{t+1}}(\bmx_t))^2$ is done coordinate-wise.
Focusing on the last term of the previous equation, we have
\begin{multline}
    -\beta_{t+1}\left\langle
        \nabla f_{\Sigma_{t+1}}(\bmx_{t})-\nabla f_{\Sigma_{t}}(\bmx_{t-1}),
        \frac{\bmm_{t}}{\sqrt{\bmv_{t+1}+\epsilon}}
    \right\rangle\\
    \geq-\beta_{t+1}
    \|\nabla f_{\Sigma_{t+1}}(\bmx_{t})-\nabla f_{\Sigma_{t}}(\bmx_{t-1})\|
    \left\|\frac{\bmm_{t}}{\sqrt{\bmv_{t+1}+\epsilon}}\right\|.
\end{multline}
Note that
\begin{align}
    \|\nabla f_{\Sigma_{t+1}}(\bmx_{t})-\nabla f_{\Sigma_{t}}(\bmx_{t-1})\|
    &\leq
        \|\nabla f_{\Sigma_{t}}(\bmx_{t})-\nabla f_{\Sigma_{t}}(\bmx_{t-1})\|
        +\|\nabla f_{\Sigma_{t+1}}(\bmx_{t})-\nabla f_{\Sigma_{t}}(\bmx_{t})\|\\
    &\leq
        L\|\bmx_t-\bmx_{t-1}\|
        +\widetilde{B}(\Sigma_{t+1},\Sigma_t)
\end{align}
using the fact that $f_{\Sigma_t}$ is L-smooth.
So,
\begin{align}
    &-\beta_{t+1}\left\langle
        \nabla f_{\Sigma_{t+1}}(\bmx_{t})-\nabla f_{\Sigma_{t}}(\bmx_{t-1}),
        \frac{\bmm_{t}}{\sqrt{\bmv_{t+1}+\epsilon}}
    \right\rangle\\
    &\geq-\beta_{t+1}
    L\|\bmx_t-\bmx_{t-1}\|
    \left\|\frac{\bmm_{t}}{\sqrt{\bmv_{t+1}+\epsilon}}\right\|
    -\beta_{t+1}
    \widetilde{B}(\Sigma_{t+1},\Sigma_t)
    \left\|\frac{\bmm_{t}}{\sqrt{\bmv_{t+1}+\epsilon}}\right\|\\
    &=-\beta_{t+1}
    L\eta_{t}
    \left\|\frac{\bmm_{t}}{\sqrt{\bmv_{t+1}+\epsilon}}\right\|^2
    -\beta_{t+1}
    \widetilde{B}(\Sigma_{t+1},\Sigma_t)
    \left\|\frac{\bmm_{t}}{\sqrt{\bmv_{t+1}+\epsilon}}\right\|.
\end{align}
Combining this with where we were in (\ref{eqn:lemma_19_1}), we have
\begin{align}
    &\mathbb{E}\left[\left.
        \left\langle
            \nabla f_{\Sigma_{t+1}}(\bmx_t),
            \frac{\bmm_{t+1}}{\sqrt{\bmv_{t+1}+\epsilon}}
        \right\rangle
    \right|\mathcal{F}_t\right]\\
    &\geq\beta_{t+1}\left\langle
        \nabla f_{\Sigma_{t}}(\bmx_{t-1}),
        \frac{\bmm_{t}}{\sqrt{\bmv_{t+1}+\epsilon}}
    \right\rangle
    +(1-\beta_{t+1})\left\|
        \frac{(\nabla f_{\Sigma_{t+1}}(\bmx_t))^2}{\sqrt{\bmv_{t+1}+\epsilon}}
    \right\|_1\\
    &\qquad\qquad\qquad-\beta_{t+1}
        L\eta_{t}
        \left\|\frac{\bmm_{t}}{\sqrt{\bmv_{t+1}+\epsilon}}\right\|^2
        -\beta_{t+1}
        \widetilde{B}(\Sigma_{t+1},\Sigma_t)
        \left\|\frac{\bmm_{t}}{\sqrt{\bmv_{t+1}+\epsilon}}\right\|.
\end{align}
So,
\begin{align}
    I
    &=\mathbb{E}\left[\mathbb{E}\left[\left.
        \left\langle
            \nabla f_{\Sigma_{t+1}}(\bmx_t),
            \frac{\bmm_{t+1}}{\sqrt{\bmv_{t+1}+\epsilon}}
        \right\rangle
    \right|\mathcal{F}_t\right]\right]\\
    &\geq
        -\beta_{t+1}\mathbb{E}\left[\left\langle
            \nabla f_{\Sigma_{t}}(\bmx_{t-1}),
            \frac{\bmm_{t}}{\sqrt{\bmv_{t+1}+\epsilon}}
        \right\rangle\right]
        +(1-\beta_{t+1})\mathbb{E}\left[\left\|
            \frac{(\nabla f_{\Sigma_{t+1}}(\bmx_t))^2}{\sqrt{\bmv_{t+1}+\epsilon}}
        \right\|_1\right]\\
    &\qquad\qquad\qquad
        -\beta_{t+1} L\eta_{t}\mathbb{E}\left[\left\|\frac{\bmm_{t}}{\sqrt{\bmv_{t+1}+\epsilon}}\right\|^2\right]\\
    &\qquad\qquad\qquad
        -\beta_{t+1}\widetilde{B}(\Sigma_{t+1},\Sigma_t)
        \mathbb{E}\left[\left\|\frac{\bmm_{t}}{\sqrt{\bmv_{t+1}+\epsilon}}\right\|\right]\\
    &\geq
        \beta_{t+1}\Theta_{t}
        +\frac{1-\beta_{t+1}}{\sqrt{M^2+\epsilon}}\mathbb{E}\left[\left\|
            \nabla f_{\Sigma_{t+1}}(\bmx_t)
        \right\|^2\right]
        -\beta_{t+1} L\eta_{t}\mathbb{E}\left[\left\|\frac{\bmm_{t}}{\sqrt{\bmv_{t+1}+\epsilon}}\right\|^2\right]\\
    &\qquad\qquad\qquad
        -\frac{\beta_{t+1}M}{\sqrt{\epsilon}}\widetilde{B}(\Sigma_{t+1},\Sigma_t)
\end{align}
where the last inequality used Lemmas 16 and 17 from \cite{he2023convergence}.

Now focusing on $II$ in (\ref{eqn:lemma_19_0}),
\begin{align}
    II
    &=\mathbb{E}\left[\left\langle
        \nabla f_{\Sigma_{t+1}}(\bmx_t),
        \frac{\bmm_{t+1}}{\sqrt{\bmv_{t+1}+\epsilon}}-\frac{\bmm_{t+1}}{\sqrt{\bmv_{t}+\epsilon}}
    \right\rangle\right]\\
    &=-\mathbb{E}\left[\left\langle
        \nabla f_{\Sigma_{t+1}}(\bmx_t),
        \frac{\bmm_{t+1}}{\sqrt{\bmv_{t}+\epsilon}}-\frac{\bmm_{t+1}}{\sqrt{\bmv_{t+1}+\epsilon}}
    \right\rangle\right]\\
    &\geq \mathbb{E}\left[
        \|\nabla f_{\Sigma_{t+1}}(\bmx_t)\|
        \left\|\frac{\bmm_{t+1}}{\sqrt{\bmv_{t}+\epsilon}}-\frac{\bmm_{t+1}}{\sqrt{\bmv_{t+1}+\epsilon}}\right\|
    \right]\\
    &\geq-\frac{\sqrt{d}M^4}{\epsilon^{\frac{3}{2}}}(1-\theta_t)
\end{align}
by Lemma 18 from \cite{he2023convergence}.

Therefore, using the definitions of $D_t$ and $\widetilde{D}_t$ from the statement of the lemma,
\begin{align}
    \Theta_{t+1}
    &\geq\beta_{t+1}\Theta_{t}
        +\frac{1-\beta_{t+1}}{\sqrt{M^2+\epsilon}}\mathbb{E}\left[\left\|
            \nabla f_{\Sigma_{t+1}}(\bmx_t)
        \right\|^2\right]
        -\beta_{t+1} L\eta_{t}\mathbb{E}\left[\left\|\frac{\bmm_{t}}{\sqrt{\bmv_{t+1}+\epsilon}}\right\|^2\right]\\
    &\qquad\qquad\qquad
        -\frac{\sqrt{d}M^4}{\epsilon^{\frac{3}{2}}}(1-\theta_{t+1})
        -\frac{\beta_{t+1}M}{\sqrt{\epsilon}}\widetilde{B}(\Sigma_{t+1},\Sigma_t)\\
    &=\beta_{t+1}\Theta_{t}
        +D_t
        -\frac{M}{\sqrt{\epsilon}}\beta_{t+1}\widetilde{B}(\Sigma_{t+1},\Sigma_t)
\end{align}
Recursively applying this inequality, we see that
\begin{align}
    \Theta_{t+1}
    &\geq\prod_{i=1}^{t+1}\beta_i\Theta_0
        +\sum_{i=1}^{t+1}\prod_{j=i+1}^{t+1}\beta_jD_i
        -\frac{M}{\sqrt{\epsilon}}\sum_{i=1}^{t+1}\prod_{j=i}^{t+1}\beta_j\widetilde{B}(\Sigma_{i},\Sigma_{i-1})\\
    &=
        \sum_{i=1}^{t+1}\prod_{j=i+1}^{t+1}\beta_jD_i
        -\frac{M}{\sqrt{\epsilon}}\sum_{i=1}^{t+1}\prod_{j=i}^{t+1}\beta_j\widetilde{B}(\Sigma_{i},\Sigma_{i-1})
\end{align}
since $\Theta_0=0$ and we used $\prod_{j=t+2}^{t+1}\beta_{t}=1$ for notational convenience.
\end{proof}

The next theorem is the \agsadam analogue of Theorem~\ref{thm:GSSGD} and the proof is the same as that of Theorem C.1 in~\cite{gssgd}, which itself is a modification of Theorem 4 from~\cite{he2023convergence}.
There are three changes to the proof from~\cite{gssgd}.
First, we use the same change in bounds as in the previous lemma.
Second, we use Corollary~\ref{cor:nesterov_lemma_4_reversed} to bound $\|\nabla f_{\Sigma}(\bmx)-\nabla f(\bmx)\|$.
Third, we use $B(\Sigma_{t+1},\Sigma_t)$ to bound $|f_{\Sigma_{t+1}}(\bmx_t)-f_{\Sigma_{t}}(\bmx_t)|$.

\begin{thm}[Theorem C.1 from~\cite{gssgd}]
\label{thm:adam_theorem4}
Let $f$ be $L$-smooth and $f^*$ denote the minimum of $f$.
Assume $\mathbb{E}\big[f_k(\bmx)\big]=f(\bmx)$ and $\mathbb{E}\big[\nabla f_k(\bmx)\big]=\nabla f(\bmx)$.
Suppose that $\mathbb{E}\big[\|\nabla f_k\|^2\big]\leq\lambda$ for any $k\in[K]$ and $(\alpha_t)_{t\geq 1}$ is a non-increasing real sequence.
Let $M\geq\mathbb{E}\big[\|\nabla f_k\|^2\big]$ which exists by Lemma~\ref{lem:difference_f_and_fsigma}.
Suppose for some $\widetilde{M}>0$ that
\begin{equation}
    \widetilde{B}(\Sigma_{t+1},\Sigma_t)\leq \widetilde{M}\eta_t
    \text{ and }
    \sum_{t=1}^{\infty}B(\Sigma_t,\Sigma_{t-1})<\infty.
\end{equation}
Let $(\bmx_t)_{t\geq 1}$ be generated by \agsadam with $\eta_t=\Theta(\alpha_t)$ (i.e., there exist $C_0,\widetilde{C_0}>0$ such that $C_0\alpha_t\leq\eta_t\leq\widetilde{C_0}\alpha_t$). Then for $T\geq 1$, we have
\begin{align}
    \min_{1\leq t\leq T}\mathbb{E}\big[\|\nabla f_{\Sigma_t}(\bmx_{t-1})\|^2\big]
    &\leq\frac{C_1+C_2\sum_{t=1}^{T}\eta_t(1-\theta_t)+C_3\sum_{t=1}^{T}\eta_t^2}{\sum_{t=1}^{T}\eta_t}.
\end{align}
where
\begin{align}
    C_1&=\frac{\sqrt{M^2+\epsilon}}{1-\beta}\left(f(\bmx_0)-f^{\star}-\sum_{t=1}^{T}B(\Sigma_t,\Sigma_{t-1})\right)\\
    C_2&=\frac{\sqrt{d}M^4\widetilde{C_0}\sqrt{M^2+\epsilon}}{\epsilon^{\nicefrac{3}{2}}C_0(1-\beta)^2}\\
    C_3&=\frac{M^2LC_0^2(2+\widetilde{M})\sqrt{M^2+\epsilon}}{\epsilon C_0^2(1-\beta)^2}
\end{align}
\end{thm}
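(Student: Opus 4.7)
The plan is to follow the Adam convergence proof from~\cite{gssgd}, which itself adapts~\cite{he2023convergence}, making exactly the three modifications listed. Since Lemma~\ref{lem:adam_lemma19} already lower bounds the critical inner product $\mathbb{E}[\langle \nabla f_{\Sigma_{t+1}}(\bmx_t), \bmm_{t+1}/\sqrt{\bmv_{t+1}+\epsilon}\rangle]$ while correctly absorbing the anisotropic switching cost via $\widetilde{B}$, the remaining work is to couple it to an $L$-smooth descent-type inequality and then telescope.

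First I would invoke Lemma~\ref{lem:smoothing_maintains_properties_nesterov}(\ref{item:lemma_a}) so that $f_{\Sigma_{t+1}}$ is $L$-smooth, and apply the standard quadratic upper bound to the update $\bmx_{t+1}=\bmx_t-\eta_{t+1}\bmm_{t+1}/\sqrt{\bmv_{t+1}+\epsilon}$:
\begin{equation}
f_{\Sigma_{t+1}}(\bmx_{t+1})
\leq f_{\Sigma_{t+1}}(\bmx_t)
-\eta_{t+1}\left\langle\nabla f_{\Sigma_{t+1}}(\bmx_t),\tfrac{\bmm_{t+1}}{\sqrt{\bmv_{t+1}+\epsilon}}\right\rangle
+\tfrac{L\eta_{t+1}^2}{2}\left\|\tfrac{\bmm_{t+1}}{\sqrt{\bmv_{t+1}+\epsilon}}\right\|^2.
\end{equation}
To telescope across steps where the smoothing matrix changes, I would then replace $f_{\Sigma_{t+1}}(\bmx_t)$ by $f_{\Sigma_t}(\bmx_t)+B(\Sigma_{t+1},\Sigma_t)$ using the bound $|f_{\Sigma_{t+1}}(\bmx_t)-f_{\Sigma_t}(\bmx_t)|\leq B(\Sigma_{t+1},\Sigma_t)$ (modification 3). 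Taking expectations and summing over $t=1,\dots,T$ collapses the $f$-values to $f(\bmx_0)-f^*$, plus an additive correction $\sum_t B(\Sigma_t,\Sigma_{t-1})$, which is finite by hypothesis.

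Next, I would substitute Lemma~\ref{lem:adam_lemma19}'s lower bound into the resulting inequality. The $(1-\beta_{i+1})/\sqrt{M^2+\epsilon}$ term produces $\mathbb{E}[\|\nabla f_{\Sigma_{i+1}}(\bmx_i)\|^2]$, which after double summation and a rearrangement identical to Lemma 3 of~\cite{he2023convergence} yields $\tfrac{1-\beta}{\sqrt{M^2+\epsilon}}\sum_t\mathbb{E}[\|\nabla f_{\Sigma_t}(\bmx_{t-1})\|^2]$ on the left. The $-L\beta_{i+1}\eta_i\|\bmm_i/\sqrt{\bmv_i+\epsilon}\|^2$ and $-\sqrt{d}M^4(1-\theta_i)/\epsilon^{3/2}$ contributions combine with the descent-lemma quadratic term to give the $\eta_t^2$ and $\eta_t(1-\theta_t)$ sums in $C_3$ and $C_2$ respectively, after bounding $\|\bmm_i/\sqrt{\bmv_i+\epsilon}\|\leq M/\sqrt{\epsilon}$. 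Finally, the $-(M/\sqrt{\epsilon})\beta_{i+1}\widetilde{B}(\Sigma_i,\Sigma_{i-1})$ term, combined with the hypothesis $\widetilde{B}(\Sigma_{t+1},\Sigma_t)\leq \widetilde{M}\eta_t$ (modification 1, replacing the isotropic $\sqrt{|\sigma_{t+1}^2-\sigma_t^2|}$ bound~\eqref{eqn:agsadam_1d_grad_bound}), contributes to the $\widetilde{M}$ factor inside $C_3$.

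The main obstacle is bookkeeping: the double-sum $\sum_{i=1}^{t+1}\prod_{j=i+1}^{t+1}\beta_j$ from Lemma~\ref{lem:adam_lemma19}, when summed again over $t\leq T$, needs the geometric-series trick $\sum_{t\geq i}\prod_{j=i+1}^{t}\beta_j\leq 1/(1-\beta)$ to collapse cleanly, and the coefficients on the various terms must be tracked to identify $C_1$, $C_2$, $C_3$. The bound $\mathbb{E}[\|\nabla f_k\|^2]\leq M$ for the smoothed stochastic gradient, used throughout as in Corollary~\ref{cor:nesterov_lemma_4_reversed} (modification 2), guarantees the $\nabla f_{\Sigma_t}$ moments are uniformly controlled so that Lemmas 16--18 of~\cite{he2023convergence} apply verbatim. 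After dividing both sides by $(1-\beta)\sum_t\eta_t/\sqrt{M^2+\epsilon}$ and using $\min_t\leq$ average, the stated bound emerges. The subsequent almost-sure convergence claim in Theorem~\ref{thm:agsadam} then follows from this quantitative bound together with the assumed summability conditions on $\eta_t^2$, $\eta_t(1-\theta_t)$, and $B(\Sigma_t,\Sigma_{t-1})$, plus a Robbins--Siegmund-style argument identical to the one used in~\cite{gssgd,he2023convergence}.
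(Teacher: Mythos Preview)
Your proposal is correct and follows essentially the same route as the paper's own proof: start from the $L$-smooth descent inequality for $f_{\Sigma_{t+1}}$, plug in Lemma~\ref{lem:adam_lemma19} together with the bounds $\|\bmm_t/\sqrt{\bmv_t+\epsilon}\|\le M/\sqrt{\epsilon}$ and $\widetilde{B}(\Sigma_{t+1},\Sigma_t)\le\widetilde{M}\eta_t$, sum, and collapse the double sums $\sum_t\eta_t\sum_{i\le t}\beta^{t-i}\eta_i$ and $\sum_t\eta_t\sum_{i\le t}\beta^{t-i}(1-\theta_i)$ via the geometric-series estimates (equations (25)--(26) in \cite{he2023convergence}, not a ``Lemma 3''). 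The only cosmetic difference is that the paper inserts the $B(\Sigma_t,\Sigma_{t-1})$ correction after summing, when the left side has already become $\sum_t\big(\mathbb{E}[f_{\Sigma_t}(\bmx_{t-1})]-\mathbb{E}[f_{\Sigma_t}(\bmx_t)]\big)$, rather than term-by-term as you describe; the effect is identical.
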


\begin{proof}
Repeating what was done at the beginning of the \agssgd proof, we have
\begin{align}
    f_{\Sigma_{t+1}}(\bmx_{t+1})
    &\leq f_{\Sigma_{t+1}}(\bmx_t)+\langle\nabla f_{\Sigma_{t+1}}(\bmx_t),\bmx_{t+1}-\bmx_t\rangle+\frac{L}{2}\|\bmx_{t+1}-\bmx_t\|^2\\
    &=
        f_{\Sigma_{t+1}}(\bmx_t)
        -\eta_{t+1}\left\langle\nabla f_{\Sigma_{t+1}}(\bmx_t),\frac{\bmm_{t+1}}{\sqrt{\bmv_{t+1}+\epsilon}}\right\rangle
        +\frac{L\eta_{t+1}^2}{2}\left\|\frac{\bmm_{t+1}}{\sqrt{\bmv_{t+1}+\epsilon}}\right\|^2.
\end{align}
This means
\begin{align}
    &\mathbb{E}\big[f_{\Sigma_{t+1}}(\bmx_{t+1})\big]\\
    &\leq
        \mathbb{E}\big[f_{\Sigma_{t+1}}(\bmx_t)\big]
        -\eta_{t+1}\mathbb{E}\left[\left\langle\nabla f_{\Sigma_{t+1}}(\bmx_t),\frac{\bmm_{t+1}}{\sqrt{\bmv_{t+1}+\epsilon}}\right\rangle\right]
        +\frac{L\eta_{t+1}^2}{2}\mathbb{E}\left[\left\|\frac{\bmm_{t+1}}{\sqrt{\bmv_{t+1}+\epsilon}}\right\|^2\right]\\
    &\leq
    \mathbb{E}\big[f_{\Sigma_{t+1}}(\bmx_t)\big]
        -\eta_{t+1}\sum_{i=1}^{t+1}\prod_{j=i+1}^{t+1}\beta_j D_i
        +\eta_{t+1}\frac{M}{\sqrt{\epsilon}}\sum_{i=1}^{t+1}\prod_{j=i}^{t+1}\beta_j\widetilde{B}(\Sigma_{i},\Sigma_{i-1})
        +\frac{L\eta_{t+1}^2M^2}{2\epsilon}\\
    &\leq
    \mathbb{E}\big[f_{\Sigma_{t+1}}(\bmx_t)\big]
        -\eta_{t+1}\sum_{i=1}^{t+1}\prod_{j=i+1}^{t+1}\beta_j D_i
        +\eta_{t+1}\frac{M\widetilde{M}}{\sqrt{\epsilon}}\sum_{i=1}^{t+1}\prod_{j=i}^{t+1}\beta_j\eta_i
        +\frac{L\eta_{t+1}^2M^2}{2\epsilon},
\end{align}
where the second inequality comes from Lemma~\ref{lem:adam_lemma19} and \cite{he2023convergence} Lemma 16 and the last inequality is by assumption.
Repeating the analysis that was done in \cite{he2023convergence} on $D_i$ (paragraph after equation (22)) and using $\beta_j<\beta$, the smoothed version of their equation (23) is
\begin{multline}
    \mathbb{E}\big[f_{\Sigma_{t+1}}(\bmx_{t+1})\big]\\
    \leq
        \mathbb{E}\big[f_{\Sigma_{t+1}}(\bmx_t)\big]
        -\frac{(1-\beta)\eta_{t+1}}{\sqrt{M^2+\epsilon}}\mathbb{E}\big[\|\nabla f_{\Sigma_{t+1}}(\bmx_t)\|^2\big]
        +\frac{\beta L\eta_{t+1}M^2}{\epsilon}\sum_{i=1}^{t+1}\beta^{t+1-i}\eta_{i}\\
        +\frac{\sqrt{d}M^4\eta_{t+1}}{\epsilon^{\nicefrac{3}{2}}}\sum_{i=1}^{t+1}\beta^{t+1-i}(1-\theta_{i})
        +\eta_{t+1}\frac{\beta M\widetilde{M}}{\sqrt{\epsilon}}\sum_{i=1}^{t+1}\beta^{t+1-i}\eta_i
        +\frac{L\eta_{t+1}^2M^2}{2\epsilon}.
\end{multline}
Rearranging and summing,
\begin{multline}
    \frac{1-\beta}{\sqrt{M^2+\epsilon}}\sum_{t=1}^{T}\eta_t\mathbb{E}\big[\|\nabla f_{\Sigma_t}(\bmx_{t-1})\|^2\big]\\
    \leq
        \sum_{t=1}^{T}\Big(\mathbb{E}\big[f_{\Sigma_t}(\bmx_{t-1})-\mathbb{E}\big[f_{\Sigma_t}(\bmx_{t})\big]\Big)
        +\frac{M^2L}{\epsilon}\sum_{t=1}^{T}\left(\eta_t\sum_{i=1}^{t}\beta^{t-i}\eta_i\right)\\
        +\frac{M^2L}{2\epsilon}\sum_{t=1}^{T}\eta_t^2
        +\frac{\sqrt{d}M^4}{\epsilon^{\nicefrac{3}{2}}}\sum_{t=1}^{T}\left(\eta_t\sum_{i=1}^{t}\beta^{t-i}(1-\theta_i)\right)\\
        +\frac{M\widetilde{M}\beta}{\sqrt{\epsilon}}\sum_{t=1}^{T}\left(\eta_t\sum_{i=1}^{t}\beta^{t-i}\eta_i\right).
\end{multline}
From equations (25) and (26) of \cite{he2023convergence}, we have that
\begin{align}
    \sum_{t=1}^{T}\eta_t\sum_{i=1}^{t}\beta^{t-i}\eta_i
        &\leq\frac{\widetilde{C_0}^2}{C_0^2(1-\beta)}\sum_{t=1}^{T}\eta_t^2\\
    \sum_{t=1}^{T}\eta_t\sum_{i=1}^{t}\beta^{t-i}(1-\theta_i)
        &\leq\frac{\widetilde{C_0}}{C_0(1-\beta)}\sum_{t=1}^{T}\eta_t(1-\theta_t).
\end{align}
So,
\begin{align}
    &\frac{1-\beta}{\sqrt{M^2+\epsilon}}\sum_{t=1}^{T}\eta_t\mathbb{E}\big[\|\nabla f_{\Sigma_t}(\bmx_{t-1})\|^2\big]\\
    &\leq
        \mathbb{E}\big[f_{\Sigma_0}(\bmx_0)\big]-\mathbb{E}\big[f_{\Sigma_T}(\bmx_T)\big] + \sum_{t=1}^{T}B(\Sigma_t,\Sigma_{t-1})
        +\frac{M^2L\widetilde{C_0}^2}{\epsilon C_0^2(1-\beta)}\sum_{t=1}^{T}\eta_t^2
        +\frac{M^2L}{2\epsilon}\sum_{t=1}^{T}\eta_t^2\\
    &\qquad\qquad
        +\frac{\sqrt{d}M^4\widetilde{C_0}}{\epsilon^{\nicefrac{3}{2}}C_0(1-\beta)}\sum_{t=1}^{T}\eta_t(1-\theta_t)
        +\frac{M\widetilde{M}\beta\widetilde{C_0}^2}{\sqrt{\epsilon}C_0^2(1-\beta)}\sum_{t=1}^{T}\eta_t^2.
\end{align}
Then
\begin{align}
    &\sum_{t=1}^{T}\eta_t\mathbb{E}\big[\|\nabla f_{\Sigma_t}(\bmx_{t-1})\|^2\big]\\
    &\leq
        \frac{\sqrt{M^2+\epsilon}}{1-\beta}\big(f(\bmx_0)-f^{\star}\big)
        +\frac{\sqrt{M^2+\epsilon}}{1-\beta}\sum_{t=1}^{T}B(\Sigma_t,\Sigma_{t-1})\\
    &\qquad\qquad
        +\frac{\sqrt{d}M^4\widetilde{C_0}\sqrt{M^2+\epsilon}}{\epsilon^{\nicefrac{3}{2}}C_0(1-\beta)^2}\sum_{t=1}^{T}\eta_t(1-\theta_t)\\
    &\qquad\qquad
        +\left(
            \frac{M^2L\widetilde{C_0}^2}{\epsilon C_0^2(1-\beta)^2\sqrt{M^2+\epsilon}}
            +\frac{M^2L\sqrt{M^2+\epsilon}}{2\epsilon(1-\beta)}
            \frac{M\widetilde{M}\beta\widetilde{C_0}^2\sqrt{M^2+\epsilon}}{\sqrt{\epsilon}C_0^2(1-\beta)^2}
        \right)\sum_{t=1}^{T}\eta_t^2\\
    &\leq
        \frac{\sqrt{M^2+\epsilon}}{1-\beta}\big(f(\bmx_0)-f^{\star}\big)
        +\frac{\sqrt{M^2+\epsilon}}{1-\beta}\sum_{t=1}^{T}B(\Sigma_t,\Sigma_{t-1})\\
    &\qquad\qquad
        +\frac{\sqrt{d}M^4\widetilde{C_0}\sqrt{M^2+\epsilon}}{\epsilon^{\nicefrac{3}{2}}C_0(1-\beta)^2}\sum_{t=1}^{T}\eta_t(1-\theta_t)\\
    &\qquad\qquad
        +\frac{M^2LC_0^2(2+\widetilde{M})\sqrt{M^2+\epsilon}}{\epsilon C_0^2(1-\beta)^2}\sum_{t=1}^{T}\eta_t^2,
\end{align}
where the last inequality uses $0\leq\beta<1$ and $0<C_0\leq\widetilde{C_0}$.
With
\begin{align}
    C_1&=\frac{\sqrt{M^2+\epsilon}}{1-\beta}\left(f(\bmx_0)-f^{\star}-\sum_{t=1}^{T}B(\Sigma_t,\Sigma_{t-1})\right)\\
    C_2&=\frac{\sqrt{d}M^4\widetilde{C_0}\sqrt{M^2+\epsilon}}{\epsilon^{\nicefrac{3}{2}}C_0(1-\beta)^2}\\
    C_3&=\frac{M^2LC_0^2(2+\widetilde{M})\sqrt{M^2+\epsilon}}{\epsilon C_0^2(1-\beta)^2}
\end{align}
we have
\begin{align}
\label{eqn:thm4_1}
    \sum_{t=1}^{T}\eta_t\mathbb{E}\big[\|\nabla f_{\Sigma_t}(\bmx_{t-1})\|^2\big]
    &\leq
        C_1
        +C_2\sum_{t=1}^{T}\eta_t(1-\theta_t)
        +C_3'\sum_{t=1}^{T}\eta_t^2.
\end{align}
Since
\begin{equation}
    \min_{1\leq t\leq T}\mathbb{E}\big[\|\nabla f_{\Sigma_t}(\bmx_{t-1})\|^2\big]\sum_{t=1}^{T}
    \leq\sum_{t=1}^{T}\mathbb{E}\big[\|\nabla f_{\Sigma_t}(\bmx_{t-1})\|^2\big],
\end{equation}
we have the result.
\end{proof}

The next result provides almost sure convergence of \agsadam.
The proof primarily relies on \eqref{eqn:thm4_1} in the proof of the previous theorem.

\begin{thm}[Theorem C.2 from \cite{gssgd}]
\label{thm:adam_theorem9}
Let $f=\frac{1}{K}\sum_{k=1}^{K}f_{k}$ be $L$-smooth and $f^*$ denote the minimum of $f$.
Suppose that $\mathbb{E}\big[\|\nabla f_k\|^2\big]\leq\lambda$ for any $k\in[K]$.
Let $(\bmx_t)_{t\geq 1}$ be generated by \agsadam.
Suppose
\begin{equation}
    \sum_{t=1}^{\infty}\frac{\eta_t}{\sum_{i=1}^{t-1}\eta_i}=\infty,
    \quad\sum_{t=1}^{\infty}\eta_t^2<\infty,
    \quad\sum_{t=1}^{\infty}\eta_t(1-\theta_t)<\infty,
\end{equation}
and $\eta_t$ is decreasing.
Assume that for some $\widetilde{M}>0$
\begin{equation}
    \widetilde{B}(\Sigma_{t+1},\Sigma_t)\leq \widetilde{M}\eta_t
    \text{ and }
    \sum_{t=1}^{\infty}B(\Sigma_t,\Sigma_{t-1})<\infty.
\end{equation}
Then for any $T\geq 1$, we have
\begin{equation}
    \min_{1\leq t\leq T}\|\nabla f_{\Sigma_{t+1}}(\bmx_t)\|^2=o\left(\frac{1}{\sum_{t=1}^T\eta_t}\right)\;\text{a.s.}
\end{equation}
\end{thm}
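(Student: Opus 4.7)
The plan is to bootstrap from Theorem~\ref{thm:adam_theorem4}. Letting $T\to\infty$ in the intermediate bound (\ref{eqn:thm4_1}) derived inside that proof, the hypotheses of the present theorem --- $\sum_t\eta_t^2<\infty$, $\sum_t\eta_t(1-\theta_t)<\infty$, $\widetilde{B}(\Sigma_{t+1},\Sigma_t)\le\widetilde{M}\eta_t$, and $\sum_t B(\Sigma_t,\Sigma_{t-1})<\infty$ --- are precisely what is needed to keep every term on the right-hand side uniformly bounded in $T$. Consequently
\begin{equation*}
\sum_{t=1}^\infty \eta_t\,\mathbb{E}\big[\|\nabla f_{\Sigma_t}(\bmx_{t-1})\|^2\big] < \infty.
\end{equation*}

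Since each summand is nonnegative, Fubini--Tonelli lets me swap the sum and the expectation, so the random series $\sum_t \eta_t\|\nabla f_{\Sigma_t}(\bmx_{t-1})\|^2$ is finite almost surely. Writing $c_t = \|\nabla f_{\Sigma_t}(\bmx_{t-1})\|^2$, $s_T = \sum_{t=1}^T \eta_t$, and $m_T = \min_{1\le t \le T} c_t$, the task reduces to showing $s_T m_T \to 0$ almost surely on the event $\{\sum_t \eta_t c_t < \infty\}$.

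The remaining step is an elementary Kronecker-style contradiction argument. First, $s_T \to \infty$: if $\sum_t \eta_t$ were finite, then $s_{t-1}$ would be bounded above and (eventually) below by a positive constant, making $\sum_t \eta_t/s_{t-1}$ finite and contradicting the first hypothesis. Next, suppose on a set of positive probability that $s_T m_T \not\to 0$; pick $\delta>0$ and a subsequence $T_k\uparrow\infty$ with $s_{T_k}m_{T_k} > \delta$. Since $m_{T_k}$ is a minimum over $\{1,\dots,T_k\}$, we have $c_t \ge \delta/s_{T_k}$ for every $t \le T_k$. Because $s_T\to\infty$, thin $(T_k)$ so that $s_{T_k}\ge 2\,s_{T_{k-1}}$; then
\begin{equation*}
\sum_{t=T_{k-1}+1}^{T_k} \eta_t c_t \;\ge\; \frac{\delta}{s_{T_k}} \bigl(s_{T_k}-s_{T_{k-1}}\bigr) \;\ge\; \frac{\delta}{2}.
\end{equation*}
Summing over $k$ forces $\sum_t \eta_t c_t = \infty$, contradicting the almost sure finiteness above. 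A one-step re-indexing converts $\nabla f_{\Sigma_t}(\bmx_{t-1})$ into $\nabla f_{\Sigma_{t+1}}(\bmx_t)$ to match the statement.

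The main obstacle I anticipate is bookkeeping rather than conceptual: one must verify carefully that each piece of the right-hand side of Theorem~\ref{thm:adam_theorem4} (including the $C_1, C_2, C_3$ constants and, critically, the assumption $\widetilde{B}(\Sigma_{t+1},\Sigma_t)\le\widetilde{M}\eta_t$ used internally to bound a momentum cross-term) remains summable or bounded under the present hypotheses, so that the passage from a $T$-dependent bound to an almost sure summable series is legitimate. Once that is in hand, the Kronecker-style contradiction is short and self-contained, and the monotonicity $\eta_t\downarrow$ is only needed insofar as it justifies the tacit claim $\eta_t\to 0$ used to guarantee $s_t\to\infty$.
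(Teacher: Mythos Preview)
Your proposal is correct and follows essentially the same route as the paper: invoke \eqref{eqn:thm4_1} from the proof of Theorem~\ref{thm:adam_theorem4}, use monotone convergence (equivalently Fubini--Tonelli on nonnegative summands) to pass from expected summability to almost-sure summability of $\sum_t\eta_t\|\nabla f_{\Sigma_t}(\bmx_{t-1})\|^2$, and finish with a Kronecker-type argument. One small correction to your closing remark: the monotonicity of $(\eta_t)$ is not what drives $s_T\to\infty$ (your own argument via $\sum_t\eta_t/s_{t-1}=\infty$ already handles that) --- rather, it is what lets you take $\alpha_t=\eta_t$ in the $\eta_t=\Theta(\alpha_t)$ hypothesis of Theorem~\ref{thm:adam_theorem4}, so that \eqref{eqn:thm4_1} is available in the first place.
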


\begin{proof}
The proof is exactly the same, just replace $f(\bmx^k)$ with $f_{\Sigma_{t+1}}(\bmx_t)$.
Also, in order to use Lebesgue's Monotone Convergence Theorem, we need to use our assumption about the summability of $B(\Sigma_{t+1},\Sigma_t)$ and use equation~\eqref{eqn:thm4_1}.
\end{proof}



We are finally ready to show that the iterates from \agsadam converge to a stationary point a.s. in $\bmx$.

\begin{proof}[Proof of Theorem~\ref{thm:agsadam}]
Since $f$ satisfies the assumptions of Theorem~\ref{thm:adam_theorem9}, from the proof of Theorem~\ref{thm:adam_theorem9} we have that
\begin{equation}
    \sum_{t=1}^{\infty}\eta_{t+1}\|\nabla f_{\Sigma_{t+2}}(\bmx_{t+1})\|^2<\infty\text{ a.s.}
\end{equation}
Since $f$ is $L$-smooth,
\begin{align}
    \Big|\|\nabla f_{\Sigma_{t+2}}(\bmx_{t+1})\|-\|\nabla f_{\Sigma_{t+1}}(\bmx_{t})\|\Big|
    &\leq\|\nabla f_{\Sigma_{t+2}}(\bmx_{t+1})-\nabla f_{\Sigma_{t+1}}(\bmx_{t})\|\\
    &\leq L\|\bmx_{t+1}-\bmx_t\|+\widetilde{B}(\Sigma_{t+1},\Sigma_t)\\
    &=L\eta_{t+1}\left\|\frac{\bmm_{t+1}}{\sqrt{\bmv_{t+1}+\epsilon}}\right\|+\widetilde{B}(\Sigma_{t+1},\Sigma_t)\\
    &\leq\frac{LM}{\sqrt{\epsilon}}\eta_{t+1}+\widetilde{B}(\Sigma_{t+1},\Sigma_t)\text{ a.s.}\\
    &\leq\frac{LM}{\sqrt{\epsilon}}\eta_{t+1}+c\eta_{t+1}\text{ a.s.}\\
    &=\eta_{t+1}\left(\frac{LM}{\sqrt{\epsilon}}+c\right).
\end{align}
By Lemma 21 of \cite{he2023convergence},
\begin{equation}
     \lim_{t\to\infty}\|\nabla f_{\Sigma_{t+1}}(\bmx_t)\|^2=0\text{ a.s.}.
\end{equation}
Since $\|\Sigma_t\|\to 0$, we know that
\begin{equation}
     \lim_{t\to\infty}\|\nabla f(\bmx_t)\|^2=0\text{ a.s.}.
\end{equation}
Since $\|\nabla f(\bmx_t)\|\leq M$ a.s., by Lebesgue's Dominated Convergence Theorem, we have that
\begin{equation}
    \lim_{t\to\infty}\mathbb{E}(\|\nabla f(\bmx_t)\|^2)
    =\mathbb{E}\left[\lim_{t\to\infty}\|\nabla f(\bmx_t)\|^2\right]
    =0.
\end{equation}
\end{proof}

\end{document}